\newtheorem{prop}{Proposition}[section]
\newtheorem{thm}[prop]{Theorem}
\newtheorem{lemma}[prop]{Lemma}
\newtheorem{coroll}[prop]{Corollary}
\theoremstyle{remark}
\newtheorem{rmk}[prop]{Remark}
\newcommand{\ep}[1]{{#1}^{(\varepsilon)}}
\newcommand{\ip}[2]{\langle #1,#2 \rangle}
\renewcommand{\div}{\operatorname{div}}
\newcommand{\lip}{\dot{C}^{0,1}}
\newcommand{\wto}{\rightharpoonup}
\newcommand{\E}{\mathbb{E}}
\newcommand{\enne}{\mathbb{N}}
\renewcommand{\P}{\mathbb{P}}
\newcommand{\erre}{\mathbb{R}}
\begin{document}

\title[Ergodicity for stochastic equations with jumps]{Ergodicity for
  nonlinear stochastic evolution equations with multiplicative Poisson
  noise}

\author{Carlo Marinelli}
\address[C.~Marinelli]{Institut f\"ur Angewandte Mathematik, Universit\"at Bonn,
  Endenicher Allee 60, D-53115 Bonn, Germany}
\urladdr{http://www.uni-bonn.de/$\sim$cm788}

\author{Giacomo Ziglio}
\address[G.~Ziglio]{Dipartimento di Matematica, Universit\`a di
Trento, via Sommarive 14, I-38123 Trento, Italy}

\date{20 September 2009}

\begin{abstract}
  We study the asymptotic behavior of solutions to stochastic
  evolution equations with monotone drift and multiplicative Poisson
  noise in the variational setting, thus covering a large class of
  (fully) nonlinear partial differential equations perturbed by jump
  noise. In particular, we provide sufficient conditions for the
  existence, ergodicity, and uniqueness of invariant measures.
  Furthermore, under mild additional assumptions, we prove that the
  Kolmogorov equation associated to the stochastic equation with
  additive noise is solvable in $L_1$ spaces with respect to an
  invariant measure.
\end{abstract}

\subjclass[2010]{Primary: 60H15, 37A25. Secondary: 60G57, 47H05}
%60G57 Random measures
%60H15 Stochastic partial differential equations

\keywords{Stochastic PDEs, invariant measures, monotone operators,
  Kolmogorov equations, Poisson measures.}

\thanks{The work for this paper was carried out while the first-named
  author was visiting the Department of Statistics of Purdue
  University supported by a MOIF fellowship.}

\maketitle

\section{Introduction}
This paper is devoted to the study of asymptotic properties of the
solution to an infinite dimensional stochastic differential equation
of the type
\begin{equation}     \label{eq:caro}
\left\{
\begin{aligned}
&du(t)+Au(t)dt=\int_Z G(u(t-),z)\,\bar{\mu}(dt,dz)\\
&u(0)=x
\end{aligned}
\right.
\end{equation}
where $A$ is a nonlinear monotone operator defined on an evolution
triple $V \subset H \subset V'$ (see e.g. the classical works
\cite{Pard, KR-spde}), and $\bar{\mu}$ is a compensated
Poisson measure.  Precise assumptions on the data of the problem will
be given below.  In particular, $A$ may be chosen as the $p$-Laplace
operator, as well as the porous media diffusion operator
$-\Delta\beta(\cdot)$, thus covering a wide class of nonlinear partial
differential equations with discontinuous random perturbations.

While existence and uniqueness of solutions for (\ref{eq:caro}) has
been established in \cite{Gyo-semimg} (in fact allowing $\bar{\mu}$ to
be a general compensated random measure), we are not aware of any
result on the asymptotic behavior of the solutions to such equations.
Furthermore, as we show in this paper, invariant measures provide a
suitable class of reference measures with respect to which one can
study infinite dimensional Kolmogorov equations of non-local type,
thus extending results that, to the best of our knowledge, were
available only for second-order (local) Kolmogorov equations (see e.g.
\cite{DP-K}).

Let us briefly describe our main results in more detail: we first
prove the existence of an invariant measure for the Markovian
semigroup associated to (\ref{eq:caro}), under the (standing)
assumption that $V$ is compactly embedded in $H$. Moreover, suitable a
priori estimates on any invariant measure imply the existence of an
ergodic invariant measure, and an extra superlinearity assumption on
$A$ yields exponential mixing, hence uniqueness. Finally, we prove
that the (non-local) Kolmogorov operator $L$ associated to
(\ref{eq:caro}), with $G$ independent of $u$, is essentially
$m$-dissipative in $L_1(H,\nu)$, with $\nu$ an infinitesimally
invariant measure for $L$. The last result in particular is equivalent
to the solvability in $L_1(H,\nu)$ of the (elliptic)
integro-differential Kolmogorov equation associated to
(\ref{eq:caro}).

We should mention that the case where the right-hand side in
(\ref{eq:caro}) is replaced by an additive Gaussian noise has been
considered in \cite{BDP-erg}, where sufficient conditions for the
existence and the uniqueness of invariant measures are given.
Moreover, the authors study the Kolmogorov equation associated to
(\ref{eq:caro}) in $L_2(H,\nu)$, assuming that $A$ is differentiable
and its differential satisfies a certain polynomial growth condition.
Our $L_1$ approach does not require any such hypothesis. Moreover,
combining the results in \cite{BDP-erg} with ours and appealing to the
L\'evy-It\^o decomposition theorem, one could rather easily obtain
corresponding results for evolution equations driven by general
(locally) square-integrable L\'evy noise.

In this regard, let us also recall that results on existence and
uniqueness of invariant measures for semilinear evolution equations
driven by L\'evy noise can be found in the recent monograph
\cite{PZ-libro}, as well as in \cite{cm:rd}. However, the authors work
in the mild setting, hence equations with fully nonlinear drift (i.e.
without a leading linear operator generating a strongly continuous
semigroup) cannot be covered. 

The rest of the paper is organized as follows: results on existence,
uniqueness, and ergodicity of invariant measures $\nu$ are contained
in Section \ref{sec:inv.meas}. In Section \ref{sec:Kol}, assuming that
$G$ does not depend on $u$ and that $A$ satisfies a (mild)
``regularizability'' hypothesis, we prove that the Kolmogorov operator
associated to the stochastic equation (\ref{eq:caro}) is dissipative,
hence closable, and its closure is $m$-dissipative in $L_1(H,\nu)$.
Equivalently, this amounts to saying that the (elliptic)
infinite-dimensional non-local Kolmogorov equation associated to
(\ref{eq:caro}) is uniquely solvable in $L_1(H,\nu)$.  In Section
\ref{sec:ex} we show that our abstract results apply to several
situations of interest. In particular, we concentrate on equations
with non-linear drift in divergence form (thus including the
$p$-Laplace operator) and on the generalized porous media equations
with pure-jump noise.

\subsection{Notation}
Given a Banach (or Hilbert) space $E$, its norm will be denoted by
$|\cdot|_E$. We shall denote the space of all Borel measureable
bounded functions from $X$ to $\erre$ by $B_b(E)$. Given another
Banach space $F$, the space of $k$-times continuously differentiable
functions from $E$ to $F$ will be denoted by $C^k(E \to F)$, and
$C^{k,1}(E \to F)$ stands for the subset of $C^k(E \to F)$ whose
elements posses a Lipschitz continuous $k$-th derivative. We shall add
a subscript $\cdot_b$ if the functions themselves and all their
derivatives are bounded. If $\phi:E \to F$ is Lipschitz continuous, we
shall write $\phi \in \dot{C}^{0,1}(E \to F)$, and we define
\[
|\phi|_{\dot{C}^{0,1}(E \to F)} := 
\sup_{x,y\in E, x\neq y} \frac{|\phi(x)-\phi(y)|_F}{|x-y|_E}.
\]
If $F=\erre$, we shall simply write $C^k(E)$ etc. Sometimes we shall
just write $C^k$ etc. if it is obvious what $E$ and $F$ are.
By $\mathcal{M}_1(E)$ we shall indicate the space of probability
measures on $E$, endowed with the $\sigma(\mathcal{M}_1(E),C^0_b(E))$
topology.
Weak convergence (of functions and measures) will be denoted by by
$\wto$, without explicit reference to the underlying topology if
no confusion arises.

If $X \leq NY$ for some positive constant $N$, we shall equivalently
write $X \lesssim Y$. If $N$ depends on a set of parameters
$p_1,\ldots,p_n$, we shall also write $N=N(p_1,\ldots,p_n)$ and $X
\lesssim_{p_1,\ldots,p_n} Y$.

\section{Invariant measures and ergodicity}
\label{sec:inv.meas}
Let $(\Omega,\mathcal{F},(\mathcal{F}_t)_{t\geq 0},\P)$ be a filtered
probability space satisfying the usual conditions, and $\E$ denote
expectation with respect to $\P$. All stochastic elements will be
defined on this stochastic basis, unless otherwise specified.
Let $(Z,\mathcal{Z},m)$ be a measure space with a $\sigma$-finite
measure $m$ and $\mu$ a Poisson random measure on $\erre_+ \times Z$ with
compensator $\mathrm{Leb}\otimes m$, and set
$\bar{\mu}:=\mu-\mathrm{Leb}\otimes m$ (Leb stands for Lebesgue
measure on $\erre$).
Let $H$ be a real separable Hilbert space, and $G: H \times Z \to H$ a
measurable function such that
\[
|G(x,\cdot)|_m^2 := \int_Z |G(x,z)|_H^2\,m(dz) < \infty
\qquad \forall x \in H.
\]
Let $V$ and $V'$ be a reflexive Banach space and its dual,
respectively, such that $V \hookrightarrow H \hookrightarrow V'$ with
dense and continous embeddings. Thanks to Asplund's renorming theorem
\cite{asplund}, we shall assume without loss of generality that both
$V$ and $V'$ are strictly convex. Furthermore, we shall assume that $V
\hookrightarrow H$ is compact.  Both the duality pairing between $V$ and
$V'$ and the inner product in $H$ will be denoted by
$\ip{\cdot}{\cdot}$. 

The operator $A:V \to V'$ is assumed to be demicontinuous
(i.e. strongly-weakly closed) and to satisfy the monotonicity
condition
\begin{equation}\label{eq:mon}
2\ip{Ax-Ay}{x-y} - |G(x,\cdot)-G(y,\cdot)|_m^2 \geq 0
\qquad \forall x,\,y \in V,
\end{equation}
as well as the following coercivity and growth conditions:
\begin{align}
  2\ip{Ax}{x} - |G(x,\cdot)|_m^2 + \alpha_0|x|^2_H &\geq \alpha_1|x|^p_V
   -C_0 &\forall x\in V,   \label{eq:milch}\\
  |Ax|_{V'} &\leq C_1|x|_V^{p-1}+C_2 &\forall x\in V,   \label{eq:kofi}
\end{align}
for some constants $\alpha_0\geq0$, $\alpha_1>0$, $C_0,C_1>0$, $C_2
\in \erre$ and $p>2$. Instead of (\ref{eq:milch}) one could assume
that there exists a constant $\alpha_1>0$ such that
\[
2\ip{Ax}{x}- |G(x,\cdot)|_m^2 \geq \alpha_1|x|^2_V \qquad \forall x \in V.
\]
Note that, by (\ref{eq:milch}) and (\ref{eq:kofi}), one has
\begin{equation}     \label{weissbier}
  |G(x,\cdot)|^2_m\leq 2C_1|x|^p_V+\alpha_0|x|^2_H+2C_2|x|_V+C_0
\qquad \forall x \in V.
\end{equation}
All assumptions stated so far will be in force throughout the
paper and will be used without further mention.

Let us recall the following well-posedness result for (\ref{eq:caro})
due to Gy{\H o}ngy \cite[Thm.~2.10]{Gyo-semimg}. Here and in the
following we shall denote the space of $H$-valued random variables
with finite $p$-th moment by $\mathbb{L}_p(H)$, and the space of
adapted processes $X:[0,T] \to H$ such that $\E \sup_{t \leq T}
|X(t)|_H^p < \infty$ by $\mathbb{H}_p(T)$.
\begin{prop}
  Let $x \in \mathbb{L}_2(H)$ and $T \geq 0$. Then equation
  (\ref{eq:caro}) admits a unique strong solution $u$ such that $u(t)
  \in V$ $\mathbb{P}$-a.s. for a.a. $t \in [0,T]$, $t \mapsto u(t)$ is
  c\`adl\`ag in $H$, and satisfies
  \[
  \E\sup_{t \leq T} |u(t)|_H^2 + \E\int_0^T|u(t)|^p_V\,dt < \infty.
  \]
  Moreover, $u$ is a Markov process, and 
  the solution map $x \mapsto u$ is Lipschitz continuous
  from $\mathbb{L}_2$ to $\mathbb{H}_2(T)$.
\end{prop}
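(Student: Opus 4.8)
The plan is to follow the classical variational scheme (Galerkin approximation plus the monotonicity/Minty argument) adapted to the jump-noise setting. First I would fix a basis $(e_i)$ of $V$ with dense linear span, set $V_n:=\operatorname{span}\{e_1,\dots,e_n\}$ with associated projection $P_n$, and solve the finite-dimensional It\^o equation
\[
du_n + P_nAu_n\,dt = \int_Z P_nG(u_n(t-),z)\,\bar\mu(dt,dz), \qquad u_n(0)=P_nx.
\]
In finite dimensions the coercivity bound (\ref{eq:milch}) rules out explosion, while demicontinuity of $A$ together with the monotonicity structure (\ref{eq:mon}) yields a unique global c\`adl\`ag solution by standard theory for SDEs driven by compensated Poisson measures.

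The second step is to derive uniform a priori estimates. Applying It\^o's formula to $|u_n(t)|_H^2$, the drift produces $-2\ip{Au_n}{u_n}$ while the jump part contributes, through its predictable quadratic variation, the \emph{positive} term $+\int_Z|P_nG(u_n,z)|_H^2\,m(dz)$. Combining these via the coercivity inequality (\ref{eq:milch}), absorbing lower-order terms with Gronwall's lemma, and controlling the supremum by the Burkholder--Davis--Gundy inequality for the purely discontinuous martingale part, one obtains
\[
\E\sup_{t\le T}|u_n(t)|_H^2 + \E\int_0^T|u_n(t)|_V^p\,dt \lesssim_T 1+\E|x|_H^2 .
\]
The growth bound (\ref{eq:kofi}) then shows $(Au_n)$ is bounded in $L^{p'}(\Omega\times[0,T];V')$, where $p'$ is the conjugate exponent, and (\ref{weissbier}) shows $(G(u_n,\cdot))$ is bounded in the relevant $L^2$ space.

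Third, by reflexivity I would extract a (non-relabeled) subsequence with $u_n\wto u$ in $L^p(\Omega\times[0,T];V)$, $Au_n\wto\xi$ in $L^{p'}(\Omega\times[0,T];V')$, and $P_nG(u_n,\cdot)\wto\Gamma$, and pass to the limit to obtain that $u$ solves $du+\xi\,dt=\int_Z\Gamma\,\bar\mu(dt,dz)$. The crux is the identification $\xi=Au$, $\Gamma=G(u,\cdot)$, which I would establish by the stochastic Minty--Browder argument: writing the energy equality for $u$ and comparing with the $\liminf$ of those for $u_n$, the jump quadratic-variation term appears with a plus sign and does not pass to weak limits. Here the combined condition (\ref{eq:mon}) is exactly tailored so that this offending term is the one subtracted in the monotonicity inequality, so that testing against an arbitrary adapted $V$-valued $v$ in $\E\int_0^T[2\ip{Au_n-Av}{u_n-v}-|G(u_n,\cdot)-G(v,\cdot)|_m^2]\,dt\ge0$ and letting $v\to u$ with demicontinuity of $A$ identifies both limits. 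I expect this identification, not the estimates, to be the genuine obstacle, precisely because the It\^o correction couples drift and noise and one must justify that no spurious term survives.

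Finally, uniqueness and the Lipschitz bound follow from one energy estimate: for solutions $u,v$ with data $x,y$, It\^o's formula for $|u-v|_H^2$ produces the drift term $-2\ip{Au-Av}{u-v}$ and the quadratic-variation term $+|G(u,\cdot)-G(v,\cdot)|_m^2$, whose sum is $\le0$ by (\ref{eq:mon}); BDG and Gronwall then give $\E\sup_{t\le T}|u(t)-v(t)|_H^2\lesssim_T\E|x-y|_H^2$, yielding both pathwise uniqueness and continuity of $x\mapsto u$ from $\mathbb{L}_2$ into $\mathbb{H}_2(T)$. The c\`adl\`ag property in $H$ is inherited from the compensated-Poisson stochastic integral together with the weak continuity of $t\mapsto u(t)$ in $V'$, and the Markov property follows from pathwise uniqueness combined with the independence and stationarity of the increments of $\mu$.
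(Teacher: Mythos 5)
Two preliminary remarks before the main point. First, the paper does not prove this proposition at all: it is quoted from Gy\"ongy \cite[Thm.~2.10]{Gyo-semimg}, so the only meaningful comparison is with the variational proof given there, and your scheme (Galerkin approximation, coercivity estimates, weak compactness, Minty's argument with the jump correction absorbed into the combined condition (\ref{eq:mon})) is indeed that scheme; your identification step, in particular, correctly isolates the role of (\ref{eq:mon}). Second, two ingredients you treat as routine are themselves the hard imported results of that literature: It\^o's formula for $|\cdot|_H^2$ applied to a process lying in $L^p(\Omega\times[0,T];V)$ whose equation holds only in $V'$, and the existence of a c\`adl\`ag $H$-valued modification, are precisely the content of the cited Gy\"ongy--Krylov It\^o-formula paper \cite{KryGyo2}; the c\`adl\`ag property does not follow merely from ``the compensated-Poisson integral is c\`adl\`ag plus weak continuity in $V'$''.

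The genuine gap is in your final step, the Lipschitz bound into $\mathbb{H}_2(T)$. Discarding the sum of the drift term and the compensator term via (\ref{eq:mon}) gives, after taking expectations at each \emph{fixed} time, $\sup_{t\le T}\E|u(t)-v(t)|_H^2\le\E|x-y|_H^2$; this is immediate (no BDG, no Gronwall) and already yields pathwise uniqueness and the Feller property of $P_t$, which is all the paper later uses. But to put the supremum inside the expectation you must estimate the martingale part $M_t=2\int_0^t\!\int_Z\ip{u(s-)-v(s-)}{G(u(s-),z)-G(v(s-),z)}\,\bar{\mu}(ds,dz)$ by BDG, and every version of that estimate reintroduces exactly the quantity you discarded: for instance $\E\sup_{t\le T}|M_t|\lesssim\E\,[M]_T^{1/2}\le\tfrac14\,\E\sup_{t\le T}|u(t)-v(t)|_H^2+C\,\E\int_0^T\!\!\int_Z|G(u(s),z)-G(v(s),z)|^2\,m(dz)\,ds$, and similarly for the compensated sum of squared jumps. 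The standing assumptions contain no separate Lipschitz condition on $G$; the only control on $|G(u,\cdot)-G(v,\cdot)|_m^2$ is (\ref{eq:mon}) itself, i.e.\ a bound by $2\ip{Au-Av}{u-v}$, and feeding that back into the energy identity is circular --- it returns $\E|u(t)-v(t)|_H^2\le\E|x-y|_H^2$ and nothing more, never a bound on $\E\int_0^T|G(u)-G(v)|_m^2\,ds$ by $\E|x-y|_H^2$. Nor can Gronwall be started, since that would require $|G(x,\cdot)-G(y,\cdot)|_m^2\lesssim|x-y|_H^2$, which is not assumed. Note the contrast with your a priori estimate for a single solution, where the analogous term $\E\int_0^T|G(u_n,\cdot)|_m^2\,ds$ is rescued by the growth bound (\ref{weissbier}); no such growth bound exists for differences of solutions. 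So ``BDG and Gronwall'' does not close this step: the $\E\sup$-Lipschitz dependence is exactly the part of the cited theorem that requires a finer argument, and your sketch does not supply one.
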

The solution to (\ref{eq:caro}) generates a Markovian semigroup $P_t$
on $B_b(H)$ by the usual prescription $P_t\phi(x)=\E\phi(u(t,x))$,
$\phi \in B_b(H)$. The continuity of the solution map ensures that
$P_t$ is Feller.

\smallskip

In the following subsection we establish the existence and uniqueness
of an ergodic invariant measure for $P_t$ under an assumption stronger
than (\ref{eq:mon}). The proof is adapted from a classical method used
for stochastic evolution equations with Wiener noise in the mild
setting (see e.g. \cite{DZ96}). This simple result is included only
for completeness, while the main results of this section are contained
in {\S}\ref{subs:gen.case}.

\subsection{Strictly dissipative case}\label{subs:str.diss.}
Throughout this subsection we assume that there exists
$\alpha\in(0,\infty)$ such that
\begin{equation}     \label{aperol}
2\ip{Ax-Ay}{x-y}-|G(x,\cdot)-G(y,\cdot)|_m^2
\geq \alpha |x-y|_H^2 \qquad \forall x,y\in V.
\end{equation}
We shall need a few preparatory results. The following inequality can
be obtained by a simple computation based on
(\ref{weissbier}), (\ref{aperol}), and Young's inequality (see e.g.
\cite[{\S}4.3]{PreRoeck} for a related case).
\begin{lemma}
  Let $\eta\in(0,\alpha)$. There exist $\delta_\eta\in(0,\infty)$
  such that
  \begin{equation} \label{sake}
    2\ip{Ax}{x}-|G(x,\cdot)|_m^2\geq\eta|x|_H^2-\delta_\eta
    \qquad \forall x \in V.
  \end{equation}
\end{lemma}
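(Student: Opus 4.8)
The plan is to extract the coercive factor $\alpha|x|_H^2$ from the strict dissipativity assumption (\ref{aperol}) by specializing it to $y=0$, and then to dispose of the resulting lower-order terms by Young's inequality, borrowing a small amount of the $|x|_V^p$-coercivity from (\ref{eq:milch}). Concretely, setting $y=0$ in (\ref{aperol}) and expanding $|G(x,\cdot)-G(0,\cdot)|_m^2 = |G(x,\cdot)|_m^2 - 2\int_Z\ip{G(x,z)}{G(0,z)}\,m(dz) + |G(0,\cdot)|_m^2$, I would obtain
\begin{equation*}
2\ip{Ax}{x}-|G(x,\cdot)|_m^2 \ge \alpha|x|_H^2 + 2\ip{A0}{x} - 2\int_Z\ip{G(x,z)}{G(0,z)}\,m(dz) + |G(0,\cdot)|_m^2,
\end{equation*}
where $A0\in V'$ and $|G(0,\cdot)|_m<\infty$ are fixed data. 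The right-hand side already carries the desired term $\alpha|x|_H^2$ (with $\alpha>\eta$); what remains is to show that the two terms $2\ip{A0}{x}$ and $-2\int_Z\ip{G(x,z)}{G(0,z)}\,m(dz)$ cost at most $(\alpha-\eta)|x|_H^2$ plus a constant.

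The term $2\ip{A0}{x}$ is bounded below by $-2|A0|_{V'}|x|_V$, and the cross term by $-2|G(x,\cdot)|_m|G(0,\cdot)|_m$ via Cauchy--Schwarz; taking square roots in (\ref{weissbier}) gives $|G(x,\cdot)|_m \lesssim |x|_V^{p/2}+|x|_H+|x|_V^{1/2}+1$, so the cross term is controlled, up to the constant factor $|G(0,\cdot)|_m$, by these same powers. Since $p>2$, every genuinely $x$-dependent contribution here is either a power of $|x|_V$ strictly below $p$ (namely $p/2$, $1$, and $1/2$) or a first power of $|x|_H$. The idea is therefore to add a small multiple $\theta>0$ of the coercivity bound (\ref{eq:milch}), rewritten as $2\ip{Ax}{x}-|G(x,\cdot)|_m^2 \ge \alpha_1|x|_V^p - \alpha_0|x|_H^2 - C_0$, to the inequality above, i.e.\ to form the convex combination with weights $1-\theta$ and $\theta$. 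This supplies a reserve $\theta\alpha_1|x|_V^p$ into which, by Young's inequality, I can absorb the sub-$p$ powers of $|x|_V$, while the first-power $|x|_H$-contributions are absorbed into the slack $(\alpha-\eta)|x|_H^2$ (again by Young, $c|x|_H \le \varepsilon|x|_H^2 + C_\varepsilon$).

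Choosing the parameters in the right order makes everything close: first fix $\theta$ small enough that $(1-\theta)\alpha-\theta\alpha_0>\eta$, then fix the Young parameters small enough that the induced $|x|_V^p$ and $|x|_H^2$ contributions exhaust neither the reserve $\theta\alpha_1|x|_V^p$ nor the slack. After these absorptions the coefficient of $|x|_H^2$ is at least $\eta$, the coefficient of $|x|_V^p$ is nonnegative (so that term may be dropped), and all remaining terms are constants whose sum I collect into $-\delta_\eta$, yielding (\ref{sake}). I expect the main obstacle to be purely one of bookkeeping: the cross term $\int_Z\ip{G(x,z)}{G(0,z)}\,m(dz)$ cannot be bounded by a constant alone, which forces one to generate a genuine $|x|_V^p$ on the right-hand side and to check that the simultaneous choice of $\theta$ and the Young parameters keeps the $|x|_H^2$-coefficient above $\eta$ while leaving the $|x|_V^p$-coefficient nonnegative. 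This is the only delicate point, and it succeeds precisely because $p>2$.
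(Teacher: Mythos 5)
Your proof is correct and is essentially the computation the paper has in mind --- the paper offers no details beyond asserting that (\ref{sake}) follows from (\ref{weissbier}), (\ref{aperol}) and Young's inequality, and your argument (specializing (\ref{aperol}) at $y=0$, bounding $2\ip{A0}{x}$ and the cross term via Cauchy--Schwarz and (\ref{weissbier}), then absorbing all subcritical powers by Young's inequality into the slack $(\alpha-\eta)|x|_H^2$ and into the $|x|_V^p$ reserve supplied by (\ref{eq:milch}), which is genuinely needed since $\ip{A0}{x}$ and $|x|_V^{p/2}$ cannot be controlled by $|x|_H^2$ alone) carries out exactly that sketch, with the parameters chosen in a workable order ($\theta$ first, then the Young parameters). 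The only slight imprecision is the closing claim that the argument succeeds ``precisely because $p>2$'': all that is used is that each exponent ($p/2$, $1$, $1/2$ in $|x|_V$, and $1$ in $|x|_H$) lies strictly below the corresponding dominating exponent, which holds already for $p=2$.
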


Let us define the random measure $\mu_1$ on $\erre \times Z$ as
\[
\mu_1(t,A) := 
\left\{
\begin{array}{ll}
\mu(t,A), & t \geq 0,\quad A \in \mathcal{Z},\\
\mu_0(-t,A), & t< 0,\quad A \in \mathcal{Z},
\end{array}
\right.
\]
with $\mu_0$ an independent copy of $\mu$, on the naturally associated
filtration $(\tilde{\mathcal{F}}_t)_{t\in\erre}$. Let us also define the
compensated measure measure $\tilde{\mu} := \mu_1 -
\mathrm{Leb}\otimes m$. 

For $s\in\erre$, consider the equation
\begin{equation}     \label{orange}
  \left\{
  \begin{aligned}
  &du(t)+Au(t)dt=\int_Z G(u(t-),z)\tilde{\mu}(dt,dz),\qquad t \geq s,\\
  &u(s)=x.
  \end{aligned}
  \right.
\end{equation}
It is clear that (\ref{orange}) admits a unique solution $u(t,s,x)$
which generates a semigroup $P_{s,t}$ on $B_b(H)$, exactly as above.

\begin{lemma}     \label{augustiner}
  Let $s\in(-\infty,0]$ and $x \in \mathbb{L}_2(H)$. There exists
  $\eta\in\mathbb{L}_2(H)$, independent of $x$, such that
  \[
  \lim_{s\rightarrow-\infty} \E |u(0,s,x) - \eta|_H^2 = 0.
  \]
  Moreover, one has
  \[
  \E|u(0,s,x)-\eta|_H^2 \lesssim e^{\alpha s}(1+\E|x|^2_H).
  \]
\end{lemma}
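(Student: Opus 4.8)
The plan is to exploit the strong dissipativity \eqref{aperol} to produce an exponential contraction between any two solutions driven by the \emph{same} noise, and then to run a backward-coupling (pullback) argument in which the initial time $s$ is sent to $-\infty$. First I would record the two basic energy estimates. Applying the It\^o formula for $|\cdot|_H^2$ in the variational (Gelfand triple) setting with Poisson jumps to the difference of two solutions $v=u(\cdot,s,x)$, $w=u(\cdot,s,y)$ of \eqref{orange}, and using that the compensated stochastic integral appearing in the expansion has zero expectation, one obtains
\[
\E|v(t)-w(t)|_H^2 = \E|x-y|_H^2
 + \E\int_s^t\bigl(-2\ip{Av-Aw}{v-w}+|G(v,\cdot)-G(w,\cdot)|_m^2\bigr)\,dr.
\]
By \eqref{aperol} the integrand is bounded above by $-\alpha|v-w|_H^2$, so Gronwall's lemma yields the contraction
\[
\E|u(t,s,x)-u(t,s,y)|_H^2 \leq e^{-\alpha(t-s)}\,\E|x-y|_H^2.
\]
Applying It\^o to a single solution and invoking \eqref{sake} (with a fixed parameter $\gamma\in(0,\alpha)$, yielding a constant $\delta_\gamma$) gives, again by Gronwall, the uniform-in-time a~priori bound $\sup_{t\geq s}\E|u(t,s,x)|_H^2 \lesssim 1+\E|x|_H^2$, with a constant independent of~$s$.

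Next I would invoke the cocycle (flow) property of \eqref{orange}: since for fixed noise the solution is unique, for $s'<s\leq 0$ one has $u(0,s',x)=u(0,s,u(s,s',x))$, where the $\tilde{\mathcal{F}}_s$-measurable random variable $u(s,s',x)$ serves as a new (random) initial datum in $\mathbb{L}_2(H)$. Comparing this with $u(0,s,x)$ through the contraction estimate (valid for random initial data, since its derivation only used adaptedness of the integrands) gives
\[
\E|u(0,s',x)-u(0,s,x)|_H^2 \leq e^{\alpha s}\,\E|u(s,s',x)-x|_H^2
 \lesssim e^{\alpha s}\,(1+\E|x|_H^2),
\]
the last step using the uniform a~priori bound. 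As the right-hand side tends to $0$ as $s\to-\infty$ uniformly in $s'<s$, the net $\{u(0,s,x)\}_{s\leq 0}$ is Cauchy in the complete space $\mathbb{L}_2(H)$; denote its limit by $\eta$. Letting $s'\to-\infty$ in the displayed inequality (the $\mathbb{L}_2$-convergence transfers to the norms, and the bound is uniform in $s'$) produces the quantitative estimate $\E|u(0,s,x)-\eta|_H^2\lesssim e^{\alpha s}(1+\E|x|_H^2)$. Independence of $\eta$ from $x$ then follows immediately from the contraction estimate: for any two initial data $x,y$ one has $\E|u(0,s,x)-u(0,s,y)|_H^2\leq e^{\alpha s}\E|x-y|_H^2\to0$ as $s\to-\infty$, so the two limits must coincide.

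The step I expect to require the most care is the first one: the rigorous use of the It\^o formula for the square of the $H$-norm in the variational framework with jump noise, and in particular the verification that the compensated Poisson integrals arising in the expansion are genuine martingales with vanishing expectation. This rests on the integrability $\E\int_s^t|G(u,\cdot)|_m^2\,dr<\infty$ furnished by \eqref{weissbier} together with the moment bounds of the quoted well-posedness result; once these are in place, everything else reduces to Gronwall's inequality, the cocycle property, and completeness of $\mathbb{L}_2(H)$.
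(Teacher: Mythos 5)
Your proposal is correct and follows essentially the same route as the paper: the paper's proof likewise combines the cocycle property (written there as the decomposition $u(0,s_1,x)-u(0,s_2,x)=\cdots+u(s_2,s_1,x)-x$), It\^o's formula for $|\cdot|_H^2$ with the dissipativity \eqref{aperol} and the single-solution bound from \eqref{sake}, and Gronwall, to arrive at exactly your estimate $\E|u(0,s_1,x)-u(0,s_2,x)|_H^2\lesssim e^{\alpha s_2}(1+\E|x|_H^2)$, from which the Cauchy property in $\mathbb{L}_2(H)$, the independence of the limit from $x$, and the exponential rate follow as you describe.
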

\begin{proof}
  For $s_1,s_2\in(-\infty,0]$, $s_1\leq s_2$ and $x\in
  \mathbb{L}_2(H)$, we have
  \begin{align*}
    u(0,s_1,x)-u(0,s_2,x)=&-\int_{s_2}^0[Au(r,s_1,x)-Au(r,s_2,x)]dr\\
    &+\int_{s_2}^0\int_Z [G(u(r,s_1,x),z)-G(u(r,s_2,x),z)]\tilde{\mu}(dr,dz)\\
    &+u(s_2,s_1,x)-x.
  \end{align*}
  Appealing to It\^o's formula for the square of the norm (see
  \cite{KryGyo2}) and recalling (\ref{sake}) we obtain
  \begin{equation}     \label{eq:trinka}
  \E|u(0,s_1,x)-u(0,s_2,x)|^2_H \leq
  \Big(\frac{\delta_\eta}{\eta}+2\E|x|_H^2\Big) e^{\alpha s_2}.
  \end{equation}
  Letting $s_2$ tend to $-\infty$, it follows that there exists
  $\eta(x)\in \mathbb{L}_2(H)$ such that
  \[
  \lim_{s\rightarrow-\infty}\E|u(0,s,x)-\eta(x)|^2=0.
  \]
  By the same arguments one can prove that
  \[
  \lim_{s\rightarrow-\infty}\E|u(0,s,x)-u(0,s,y)|^2=0
  \]
  for all $x,y\in \mathbb{L}_2(H)$, hence that $\eta$ is independent
  of $x\in \mathbb{L}_2(H)$. Letting $s_1$ tend to $-\infty$ in
  (\ref{eq:trinka}) one obtains the exponential convergence.
\end{proof}

We can now prove the main result of this subsection.
\begin{thm}     \label{paulaner}
  Assume that (\ref{aperol}) holds and that $x \in
  \mathbb{L}_2(H)$. There exists a unique invariant measure $\nu$ for
  $P_t$. Moreover, one has
  \[
  \int|y|^2_H\,\nu(dy) < \infty,
  \]
  and
  \[
  \left|P_t\varphi(y)-\int \varphi(w)\,\nu(dw)\right|\leq
  e^{-\frac\alpha2t} |\varphi|_{\dot{C}^{0,1}}\int|y-w|_H\,\nu(dw)
  \]
  for all $t \geq 0$, $y \in H$, and $\varphi \in C_b^{0,1}(H)$.
\end{thm}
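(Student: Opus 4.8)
The plan is to use the backward‑in‑time construction from Lemma~\ref{augustiner} to produce the invariant measure, then read off moment bounds and the contraction estimate. First I would define $\nu$ as the law of the limiting random variable $\eta$ given by Lemma~\ref{augustiner}, i.e.\ $\nu := \P\circ\eta^{-1}$, where $\eta = \lim_{s\to-\infty} u(0,s,x)$ in $\mathbb{L}_2(H)$ (the limit being independent of $x$). To check invariance I would exploit the stationarity of the noise $\tilde\mu$ under time shifts: the law of $u(t,s,x)$ coincides with that of $u(0,s-t,x)$, so $P_t\varphi(x) = \E\varphi(u(t,0,x))$ and, passing to the limit in the initial time, $\int P_t\varphi\,d\nu = \lim_{s\to-\infty}\E\varphi(u(t,s,x)) = \lim_{s\to-\infty}\E\varphi(u(0,s-t,x)) = \int\varphi\,d\nu$. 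The Feller property from the Proposition, together with dominated convergence for $\varphi\in C_b(H)$, makes these limits legitimate.

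Next I would establish the second moment bound. The estimate in Lemma~\ref{augustiner}, $\E|u(0,s,x)-\eta|_H^2 \lesssim e^{\alpha s}(1+\E|x|_H^2)$, controls the distance to $\eta$, but to bound $\E|\eta|_H^2$ I also need a uniform‑in‑$s$ a priori estimate on $\E|u(0,s,x)|_H^2$. This follows from It\^o's formula for the square of the norm applied to $u(\cdot,s,x)$ combined with the dissipativity inequality (\ref{sake}): one gets $\frac{d}{dt}\E|u(t,s,x)|_H^2 \leq -\eta\,\E|u(t,s,x)|_H^2 + \delta_\eta$, whence by Gronwall $\E|u(0,s,x)|_H^2$ is bounded uniformly in $s\le 0$. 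Taking $x$ deterministic and letting $s\to-\infty$, Fatou's lemma gives $\int|y|_H^2\,\nu(dy) = \E|\eta|_H^2 \leq \delta_\eta/\eta < \infty$.

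For the contraction estimate I would return to It\^o's formula applied to $|u(t,0,y)-u(t,0,w)|_H^2$ for two deterministic starting points $y,w\in H$. Using (\ref{aperol}) in the form $2\ip{Au-Av}{u-v} - |G(u,\cdot)-G(v,\cdot)|_m^2 \geq \alpha|u-v|_H^2$ and taking expectations, the martingale part vanishes and the compensator of the jump term is exactly cancelled against the $|G(\cdot)-G(\cdot)|_m^2$ contribution, yielding $\frac{d}{dt}\E|u(t,0,y)-u(t,0,w)|_H^2 \leq -\alpha\,\E|u(t,0,y)-u(t,0,w)|_H^2$, so that $\E|u(t,0,y)-u(t,0,w)|_H^2 \leq e^{-\alpha t}|y-w|_H^2$. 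By Jensen, $\E|u(t,0,y)-u(t,0,w)|_H \leq e^{-\alpha t/2}|y-w|_H$. Then for $\varphi\in C_b^{0,1}(H)$ I write, using invariance $\int\varphi\,d\nu = \int P_t\varphi\,d\nu = \int\E\varphi(u(t,0,w))\,\nu(dw)$,
\[
\Bigl|P_t\varphi(y)-\int\varphi\,d\nu\Bigr|
= \Bigl|\int \E\bigl[\varphi(u(t,0,y))-\varphi(u(t,0,w))\bigr]\,\nu(dw)\Bigr|
\leq |\varphi|_{\lip}\int \E|u(t,0,y)-u(t,0,w)|_H\,\nu(dw),
\]
and the inner expectation is bounded by $e^{-\alpha t/2}|y-w|_H$, giving the claimed inequality. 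Uniqueness is then immediate: any invariant $\nu'$ must satisfy $\int\varphi\,d\nu' = \int P_t\varphi\,d\nu'$, and letting $t\to\infty$ in the contraction forces $\int\varphi\,d\nu' = \int\varphi\,d\nu$ for all $\varphi\in C_b^{0,1}(H)$, a measure‑determining class.

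The main obstacle I anticipate is the rigorous justification of It\^o's formula for the square of the $H$‑norm in the variational setting with jump noise, since $u$ lives in $V$ only for a.e.\ $t$ while taking values in $H$ with c\`adl\`ag paths; one must invoke the appropriate It\^o formula for such processes (as in \cite{KryGyo2}) and check that the cross terms and the jump compensator combine precisely so that the monotonicity/dissipativity assumption (\ref{aperol}) produces the clean exponential decay, with no leftover positive contribution from the second‑order jump correction. The bookkeeping of the compensated‑Poisson term against $|G(x,\cdot)-G(y,\cdot)|_m^2$ is the delicate point, but it is exactly what (\ref{aperol}) is designed to absorb.
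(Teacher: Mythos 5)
Your proposal is correct and follows essentially the same route as the paper: take $\nu$ to be the law of the limit $\eta$ from Lemma~\ref{augustiner}, obtain invariance from the shift-stationarity of the noise together with the convergence $u(0,s,x)\to\eta$, and derive the exponential estimate (hence uniqueness) from It\^o's formula for $|u(t,y)-u(t,w)|_H^2$ combined with (\ref{aperol}), Gronwall, and Jensen. The only differences are that you fill in details the paper leaves implicit (the Gronwall/Fatou derivation of $\E|\eta|_H^2\leq\delta_\eta/\eta$, whereas the paper simply reads $\int|y|_H^2\,\nu(dy)<\infty$ off from $\eta\in\mathbb{L}_2(H)$, and the explicit It\^o computation behind the contraction), which is harmless.
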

\begin{proof}[Proof of Theorem \ref{paulaner}]
  Let $\nu$ be the law of the random variable $\eta$ constructed in
  Lemma \ref{augustiner}. In particular, $\int |y|^2\,\nu(dy)<\infty$
  is equivalent to $\eta \in \mathbb{L}_2(H)$. Similarly, the previous lemma
  immediately yields $P^*_{s,0}\delta_y \wto \nu$ for all $y \in H$ 
  in $\mathcal{M}_1(H)$ as $s \to -\infty$. Moreover, for any
  $\varphi \in C_b^{0,1}(H \to \erre)$, we have
  \[
  \int (P_{0,t}\varphi)\,d\nu =
  \lim_{s\rightarrow-\infty} \int(P_{0,t}\varphi)\,d(P^*_{s,0}\delta_y)
  = \lim_{s\rightarrow-\infty}(P_{s-t,0}\varphi)(y)
  = \int \varphi\,d\nu,
  \]
  i.e. $\nu$ is invariant for $P_t$. Moreover, if $\nu$ is an
  invariant measure for $P_t$, we have
  \[
  \Big| P_t\varphi(y) - \int\varphi\,d\nu \Big|
  = \Big| \int (P_t\varphi(y)-P_t\varphi(w))\,\nu(dw) \Big|
  \leq e^{-\frac{\alpha}{2}t} |\varphi|_{\lip} \int|y-w|_H\,\nu(dw)
  \]
  for all $t \geq 0$.
\end{proof}

\subsection{General case}\label{subs:gen.case}
We can still prove the existence of an ergodic invariant measure without
the assumption that the couple $(A,G)$ is strictly dissipative, using
an argument based on Krylov-Bogoliubov's theorem.
\begin{thm}     \label{thm:exim}
  There exists an invariant measure $\nu$ for $P_t$. Moreover,
  $\nu$ is concentrated on $V$, i.e. $\nu(V)=1$.
\end{thm}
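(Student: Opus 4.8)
The plan is to apply the Krylov--Bogoliubov theorem, which requires two ingredients: (i) the Feller property of $P_t$, already noted after the well-posedness proposition, and (ii) tightness of the family of time-averaged measures
\[
\nu_T(\cdot) := \frac{1}{T}\int_0^T P_t^*\delta_{x_0}(\cdot)\,dt
\]
for some fixed starting point $x_0$, say $x_0=0\in H$. The key to tightness is a uniform-in-time a priori bound on the solution in the norm of $V$, exploiting the standing assumption that $V\hookrightarrow H$ is compact: once I control $\frac1T\int_0^T \E|u(t)|_V^p\,dt$ uniformly in $T$, the sublevel sets $\{|\cdot|_V\le R\}$ are relatively compact in $H$, and a Chebyshev/Markov argument gives uniform tightness of $(\nu_T)$ in $\mathcal{M}_1(H)$.

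First I would derive the energy estimate. Applying It\^o's formula for $|u(t)|_H^2$ (as in \cite{KryGyo2}) and taking expectations, the martingale part of the jump integral vanishes and the quadratic-variation correction produces exactly the term $|G(u,\cdot)|_m^2$, so that
\begin{equation*}
\E|u(t)|_H^2 + \E\int_0^t\bigl(2\ip{Au(r)}{u(r)}-|G(u(r),\cdot)|_m^2\bigr)\,dr = |x|_H^2.
\end{equation*}
Invoking the coercivity assumption \eqref{eq:milch} in the form $2\ip{Ax}{x}-|G(x,\cdot)|_m^2 \ge \alpha_1|x|_V^p - \alpha_0|x|_H^2 - C_0$ and integrating, I obtain
\[
\alpha_1\,\E\int_0^t|u(r)|_V^p\,dr \le |x|_H^2 + \alpha_0\,\E\int_0^t|u(r)|_H^2\,dr + C_0 t.
\]
To close this I need the $H$-norm bound $\E|u(t)|_H^2$ to be controlled; dividing the energy identity through and using \eqref{sake} with a suitable $\eta$ (or Gronwall together with the bound coming from coercivity) shows that $\sup_{t\ge 0}\E|u(t)|_H^2$ is finite, whence $\frac1T\E\int_0^T|u(r)|_V^p\,dr \lesssim 1$ uniformly in $T$.

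Given this, tightness follows: for $R>0$, Markov's inequality gives $\nu_T(\{|y|_V>R\}) \le R^{-p}\frac1T\E\int_0^T|u(r)|_V^p\,dr \le C R^{-p}$, and since $\{|y|_V\le R\}$ is compact in $H$ by the compact embedding, the family $(\nu_T)$ is tight in $\mathcal{M}_1(H)$. By Prokhorov's theorem a subsequence $\nu_{T_n}\wto\nu$, and the Krylov--Bogoliubov argument (using the Feller property to pass $P_s$ through the weak limit) shows $\nu$ is invariant. The concentration claim $\nu(V)=1$ comes from the same uniform $V$-bound: by weak lower semicontinuity of $y\mapsto|y|_V^p$ on $H$ (extended to $+\infty$ off $V$) and Fatou's lemma along the converging subsequence, $\int|y|_V^p\,\nu(dy)\le \liminf_n \int|y|_V^p\,\nu_{T_n}(dy) < \infty$, which forces $\nu(V)=1$.

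The main obstacle I anticipate is the rigorous justification of the It\^o formula and the resulting energy balance in the variational framework, where $u$ lives in $V$ only for almost every $t$ and the duality pairing $\ip{Au}{u}$ must be interpreted via the Gelfand triple rather than as an $H$-inner product; one must also check that the stochastic integral against $\bar\mu$ is a genuine martingale (so its expectation vanishes) and that the compensator contributes the correct $|G|_m^2$ term. The subtlety is ensuring all integrability is in place so that the a priori estimate is not merely formal; this is where the growth bound \eqref{weissbier} and the finite-moment conclusion of the well-posedness proposition are essential.
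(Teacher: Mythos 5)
Your proposal follows essentially the same route as the paper's proof: It\^o's formula for the square of the $H$-norm plus the coercivity condition \eqref{eq:milch} yields a uniform bound on $\frac1T\E\int_0^T|u(t)|_V^p\,dt$, Markov's inequality together with the compactness of $V\hookrightarrow H$ gives tightness of the time-averaged laws, Krylov--Bogoliubov (via the Feller property) yields invariance, and lower semicontinuity of the $V$-norm extended by $+\infty$ gives $\nu(V)=1$. The paper implements this last step by representing the extended norm as $\Theta(y)=\sup_k|\ip{\ell_k}{y}|$ over a countable dense subset $\{\ell_k\}$ of $B_1^{V'}\cap H$ and passing to the limit with truncations; this is exactly the verification of the lower semicontinuity you invoke, so your argument is the same one, stated more compactly.

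One caution on the middle step: you cannot cite \eqref{sake} here, since that lemma is proved under the strict dissipativity assumption \eqref{aperol}, which is not in force in the general case covered by Theorem \ref{thm:exim}. The inequality you need does hold, but it must be rederived from the standing assumptions: by \eqref{eq:milch}, the embedding $|x|_H\le c|x|_V$, and the elementary inequality $\varepsilon^2 y^2\le\varepsilon^p y^p+1$ (this is where $p>2$ enters), one gets $2\ip{Ax}{x}-|G(x,\cdot)|_m^2\ge \eta|x|_H^2-\delta$ for suitable $\eta,\delta>0$, after which Gronwall gives $\sup_{t\ge0}\E|u(t)|_H^2<\infty$; this is precisely how the paper closes the estimate in \eqref{apfelschorle}--\eqref{apfelsaft}. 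Note that your parenthetical alternative must be executed in this form: a naive Gronwall applied to $\E|u(t)|_H^2\le|x|_H^2+\alpha_0\E\int_0^t|u|_H^2\,ds+C_0t$ (i.e.\ discarding the coercive $V$-term) only gives exponential growth, so retaining the negative term $-\alpha_1\E\int_0^t|u|_V^p\,ds$ and converting it into genuine $H$-dissipativity is the one step that cannot be skipped.
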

\begin{proof}
  We assume $p>2$, since the proof for the case $p=2$ is completely
  similar. Let $x \in \mathbb{L}_2(H)$. By It\^o's formula for the
  square of the norm in $H$ (see \cite{KryGyo2}) we have
  \begin{align}
    |u(t,x)|^2_H &- |x|_H^2 = 2\int_0^t \ip{u(s-,x)}{du(s,x)} + [u](t)
    \nonumber\\
    &= -2\int_0^t \ip{Au(s,x)}{u(s,x)}\,ds
    +2\int_0^t\!\int_Z\ip{u(s-,x)}{G(u(s-,x),z)}\,\bar{\mu}(ds,dz)
    \nonumber\\
    &\quad +\int_0^t\!\int_Z|G(u(s-,x),z)|_H^2\,\mu(ds,dz).\label{eq:ito}
  \end{align}
  Taking expectations on both side and recalling that the compensator
  of $\mu$ is $\mathrm{Leb}\otimes m$, we obtain
  \begin{equation*}
    \E|u(t,x)|^2_H = -2\E\int_0^t \ip{Au(s,x)}{u(s,x)}\,ds + \E|x|_H^2
    + \int_0^t |G(u(s,x),\cdot)|^2_{m}\,ds,
  \end{equation*}
  hence, thanks to (\ref{eq:milch}),
  \begin{equation}     \label{lemonade}
    \E|u(t,x)|^2_H \leq \alpha_0\E\int_0^t|u(s,x)|_H^2\,ds
    -\alpha_1\E\int_0^t|u(s,x)|_V^p ds+\E|x|_H^2+tC_0.
  \end{equation}
  Since $V \hookrightarrow H$ is continuous, there exists a constant
  $c>0$ such that $|v|_H\leq c|v|_V$ for all $v \in V$, hence
  \[
  \E|u(t,x)|^2_H \leq \alpha_0\E\int_0^t|u(s,x)|_H^2\,ds -
  \frac{\alpha_1}{c^p} \E\int_0^t|u(s,x)|_H^p\,ds +\E|x|_H^2 + tC_0.
  \]
  The elementary inequality $\varepsilon^2 |y|^2 \leq \varepsilon^p
  |y|^p+1$ (with $\varepsilon>0$ and $p\geq2$) yields
  \[
  -\E|u(t,x)|_H^p \leq -\varepsilon^{2-p}\E|u(t,x)|_H^2 +
  \varepsilon^{-p},
  \]
  thus also
  \begin{align} \label{apfelschorle}
    \E|u(t,x)|^2_H &\leq -\left(\frac{\alpha_1\varepsilon^{2-p}}{c^p}
    - \alpha_0\right) \int_0^t\E|u(s,x)|_H^2\,ds 
    + t\left(\frac{\alpha_1\varepsilon^{-p}}{c^p}+C_0\right)+\E|x|_H^2\nonumber\\
    &=-\gamma\int_0^t\E|u(s,x)|_H^2 ds+\E|x|^2_H+tC
  \end{align}
  where
  \[
  \gamma := \frac{\alpha_1\varepsilon^{2-p}}{c^p}-\alpha_0, \qquad 
  C := \frac{\alpha_1\varepsilon^{-p}}{c^p}+C_0.
  \]
  Choosing $\varepsilon$ so that $\gamma>0$ and applying Gronwall's
  inequality to (\ref{apfelschorle}), it follows that
  \begin{equation} \label{apfelsaft}
    \E|u(t,x)|^2_H\leq\E|x|^2_He^{-\gamma t}+K\qquad\forall t\geq0
  \end{equation}
  where $K$ is a constant independent of $t$.  Moreover, by
  (\ref{lemonade}) and (\ref{apfelsaft}) we obtain
  \begin{align} \label{cola}
    \E\int_0^t|u(s,x)|_V^p\,ds &\leq \frac{1}{\alpha_1}
    \left(\alpha_0\E\int_0^t|u(s,x)|_H^2 ds + \E|x|_H^2
    + tC_0\right)\nonumber\\
    &\leq \frac{1}{\alpha_1} \left[\left(\frac{\alpha_0}{\gamma}+1\right)
          \E|x|_H^2 + t(\alpha_0K+C_0)\right]
  \end{align}
  for all $t>0$.

  We shall now use the estimates just obtained to prove the tightness
  of the sequence of measures
  \[
  \nu_n := \frac{1}{n}\int_0^n \lambda_t\,dt, \qquad n \in \enne,
  \]
  where $\lambda_t$ stands for the law of the random variable
  $u(t,0)$, so that
  \[
  \int_H \varphi\,d\nu_n = \frac1n \int_0^n \E\varphi(u(t,0))\,dt
  \]
  for all $\varphi \in B_b(H)$. By (\ref{cola}) we obtain
  \[
  \E\int_0^t|u(s,0)|_V^p\,ds \lesssim 1 + t \qquad \forall t>0,
  \]
  which in turn implies
  \begin{equation}\label{sahne}
    \int_H |y|_V^p\,\nu_n(dy) = \frac1n \int_0^n \E|u(s,0)|_V^p\,ds \lesssim 1
    \qquad\forall n\in\enne.
  \end{equation}
  By Markov's inequality we thus obtain
  \[
  \sup_{n\in\enne}\nu_n(|y|_V\geq R) \leq \sup_{n\in\enne}\frac{1}{nR^p}
  \int_0^n \E|u(s,0)|_V^p\,ds \lesssim \frac{1}{R^p},
  \]
  which converges to zero as $R \to \infty$.  Since the ball
  $B_R:=\{y \in H: |y|_V \leq R\}$ is bounded in $V$, and $V
  \hookrightarrow H$ is compact, it follows that, for any given
  $\varepsilon$, there exists $\bar{R}\in\erre_+$ such that
  $\nu_n(B_{\bar{R}}) > 1 - \varepsilon$ uniformly over $n$, with
  $B_{\bar{R}}$ a compact subset of $H$. In other words, the sequence
  $\nu_n$ is tight, and Prohorov's theorem yields the existence of a
  subsequence $\nu_{n_k}$ such that $\nu_{n_k}\wto \nu$ in
  $\mathcal{M}_1(H)$.  Furthermore, recalling that $P_t$ is Feller on
  $H$, $\nu$ is an invariant measure for $P_t$ by Krylov-Bogoliubov's
  theorem.

  Let us now show that $\nu$ is concentrated on $V$. To this end,
  let $\Theta:H\rightarrow[0,\infty]$ be a lower semicontinuous
  function such that
  \[
  \Theta(y) =
  \begin{cases}
    |y|_V, &y \in V,\\
    +\infty, &y \in H \setminus V
  \end{cases}
  \]
  and $\Theta(y)=\sup_{k\in\enne} \big| \ip{\ell_k}{y} \big|$, where
  $\{\ell_k\}_{k\in\enne}$ is a countable dense subset of $B_1^{V'}
  \cap H$ in the topology of $H$ (see e.g. \cite[p.~74]{PreRoeck}),
  and $B_1^{V^\prime}$ is the closed unit ball in $V^\prime$. Then
  $(\ref{sahne})$ implies
  \begin{align*}
    \int_H \Theta(y)^p\,\nu(dy) &= \lim_{L\to\infty}\lim_{M\to\infty}
    \int_H \big(\sup_{k\leq L} |\ip{\ell_k}{y}|^p\wedge M \big)\,\nu(dy)\\
    &= \sup_{L,M\in\enne} \lim_{h\to\infty}
    \int_H \big(\sup_{k\leq L} |\ip{\ell_k}{y}|^p \wedge M
           \big)\,\nu_{n_h}(dy)\\
    &\leq \liminf_{h\to\infty} \sup_{L,M\in\enne}\int_H\big(
          \sup_{k\leq L} |\ip{\ell_k}{y}|^p \wedge M \big)\nu_{n_h}(dy)\\
    &= \liminf_{h\to\infty}\int_H |y|^p_V\,\nu_{n_h}(dy) < \infty,
  \end{align*}
  hence $\Theta<\infty$ $\nu$-a.e., thus also $\nu(V)=1$ since $\{
  y \in H:\,\Theta(y)<\infty\}=V$.
\end{proof}

\begin{thm}     \label{thm:intega}
  Let $\nu$ be an invariant measure for $P_t$. Then $\nu$ satisfies
  the estimate
  \[
    \int_H \big( |x|_H^2 + |x|_V^p \big)\,\nu(dx) < \infty.
  \]
\end{thm}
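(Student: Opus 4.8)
The plan is to deduce both moment bounds from the a priori estimates already established in the proof of Theorem \ref{thm:exim}, namely the Lyapunov-type bound (\ref{apfelsaft}),
\[
\E|u(t,x)|_H^2 \le |x|_H^2 e^{-\gamma t} + K \qquad \forall t \ge 0,
\]
valid for deterministic $x \in H$ (with $\gamma>0$ and $K$ the constants appearing there), together with the integrated bound (\ref{cola}) on the $V$-norm. The only genuine difficulty is that, for an \emph{arbitrary} invariant measure $\nu$, we do not know in advance that $\int_H |x|_H^2\,\nu(dx)$ is finite; hence we cannot simply integrate these estimates against $\nu$ and absorb, since the self-referential inequality $m \le e^{-\gamma t}m + K$ is vacuous when $m=\infty$. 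I would therefore first establish finiteness of the second moment through a truncation argument, and only afterwards treat the $V$-norm, at which point starting the equation from the (now square-integrable) measure $\nu$ becomes legitimate.

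For the second moment, I would test invariance against the bounded continuous functions $g_N(x):=|x|_H^2 \wedge N$, $N\in\enne$. Invariance gives $\int_H g_N\,d\nu = \int_H P_t g_N\,d\nu$ for every $t\ge 0$, and since $\E[Y\wedge N]\le \min(\E Y,N)$ one has, using (\ref{apfelsaft}),
\[
P_t g_N(x) = \E\big[|u(t,x)|_H^2 \wedge N\big] \le \min\big(|x|_H^2 e^{-\gamma t}+K,\,N\big).
\]
The right-hand side is dominated by the $\nu$-integrable constant $N$ and converges pointwise to $\min(K,N)\le K$ as $t\to\infty$; dominated convergence then yields $\int_H g_N\,d\nu \le K$ uniformly in $N$. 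Letting $N\to\infty$ and invoking monotone convergence gives $\int_H |x|_H^2\,\nu(dx) \le K < \infty$.

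Once the second moment is finite, any random variable $X_0$ with law $\nu$ belongs to $\mathbb{L}_2(H)$, so I would consider the stationary solution obtained by taking $X_0\sim\nu$ independent of the noise, for which $X_s:=u(s,X_0)$ has law $\nu$ for every $s\ge 0$. Applying (\ref{cola}) with the random square-integrable initial datum $X_0$ (the It\^o-formula derivation is unchanged for random $\mathbb{L}_2$ data) and using stationarity together with Tonelli's theorem to write $\E\int_0^t |X_s|_V^p\,ds = t\int_H |x|_V^p\,\nu(dx)$, one obtains
\[
t\int_H |x|_V^p\,\nu(dx) \le \frac{1}{\alpha_1}\Big[\Big(\tfrac{\alpha_0}{\gamma}+1\Big)\int_H|x|_H^2\,\nu(dx) + t(\alpha_0 K + C_0)\Big].
\]
Dividing by $t$ and letting $t\to\infty$ kills the (now finite) second-moment term and leaves $\int_H |x|_V^p\,\nu(dx) \le (\alpha_0 K + C_0)/\alpha_1 < \infty$; in particular this forces $\nu(V)=1$. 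The main obstacle is thus concentrated in the second-moment step, where the a priori possible infiniteness of $m$ must be circumvented by truncation before any absorption is attempted; the $V$-estimate is then a routine consequence of the time-averaged coercivity bound.
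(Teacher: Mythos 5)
Your proof is correct, but it follows a genuinely different route from the paper's. The paper does not reuse the estimates (\ref{apfelsaft}) and (\ref{cola}) at all: it redoes the It\^o computation, applying It\^o's formula to $\varphi_N(|u(t,x)|_H^2)$, where $\varphi_N$ is a \emph{concave} $C^1$ truncation of the identity (with $\varphi_N'=\chi_N$ a smooth cutoff). Concavity makes the jump correction terms in It\^o's formula nonpositive, so they can be discarded with no integrability assumptions; taking expectations, using the coercivity (\ref{eq:milch}), integrating against $\nu$ and invoking invariance at the single time $t=1$ yields the self-consistent truncated inequality (\ref{eq:ced}), after which a Young-type absorption (choosing $\varepsilon$ small) gives a bound on the truncated second moment uniform in $N$, and monotone convergence delivers both moments simultaneously from (\ref{eq:ced}). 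You instead truncate at the level of the test function, $g_N=|x|_H^2\wedge N$, exploit the exponential decay in (\ref{apfelsaft}) by letting $t\to\infty$ under invariance, and then obtain the $V$-moment from (\ref{cola}) via a stationary solution. What your route buys: it is more modular (no new stochastic calculus beyond Theorem \ref{thm:exim}), it cleanly isolates the only real difficulty (possible infiniteness of the second moment, which you correctly identify as blocking naive absorption), and it produces explicit constants ($\int|x|_H^2\,d\nu\leq K$, $\int|x|_V^p\,d\nu\leq(\alpha_0K+C_0)/\alpha_1$) uniform over all invariant measures, which is precisely what the subsequent corollary on ergodic measures needs. What the paper's route buys: it is self-contained in one computation, uses invariance only at a fixed finite time, and avoids any stationary-solution construction. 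On that last point, note two small technicalities in your argument: (i) the stationary solution requires the stochastic basis to support an $\mathcal{F}_0$-measurable $X_0\sim\nu$ independent of $\mu$ (enlarge the space if necessary), and the identity $\mathrm{Law}(u(s,X_0))=\nu$ rests on the standard disintegration via pathwise uniqueness; you could avoid this entirely by integrating (\ref{cola}) for deterministic $x$ against $\nu(dx)$ and using Tonelli together with the extension of the invariance identity to nonnegative measurable functions; (ii) to write $\E|u(s,X_0)|_V^p=\int_H|x|_V^p\,\nu(dx)$ you should extend $|\cdot|_V$ to $H$ by $+\infty$ and use its measurability, e.g. via the lower semicontinuous function $\Theta$ constructed in the proof of Theorem \ref{thm:exim}. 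Neither point affects the validity of your argument.
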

\begin{proof}
  Let $x \in H$ and consider the one dimensional process
  $U(t):=|u(t,x)|_H^2$, which can be written, in view of \ref{eq:ito}), as
  \[
  U(t) = |x|_H^2 + \int_0^t F_1(s)\,ds + \int_0^t\!\int_Z
  F_2(s,z)\,\bar{\mu}(ds,dz) + \int_0^t\!\int_Z F_3(s,z)\,\mu(ds,dz),
  \]
  where $F_1$, $F_2$, $F_3$ are defined in the obvious way. Let $\chi
  \in C^1_b(\erre_+,\erre)$ be a smooth cutoff function with
  $\chi(x)=1$ for all $x\in[0,1]$, $\chi(x)=0$ for all $x \geq 2$, and
  $\chi'(x)\leq 0$ for all $x\in\erre_+$. Setting
  $\chi_N(x)=\chi(x/N)$ and $\varphi_N(x)=\int_0^y \chi_N(y)\,dy$ for
  all $x \in \erre_+$, It\^o's formula yields, suppressing the
  $\cdot_H$ subscript for semplicity of notation,
  \begin{align}
  \varphi_N(U(t)) &= \varphi_N(|x|^2)
  + \int_0^t \varphi'_N(U(s-))\,dU(s)\nonumber\\
  &\quad + \sum_{s\leq t} \big[ \varphi_N(U(s-)+\Delta U(s))
  - \varphi_N(U(s-)) - \varphi'_N(U(s-))\Delta U(s) \big]
  \label{eq:rata}
  \end{align}
  By Taylor's formula, there exists $\theta \in (0,1)$ such that the
  summand in the last term on the right-hand side can be written as
  \[
  \varphi''_N\big(U(s-) + \theta\Delta U(s)\big) \, |\Delta U(s)|^2,
  \]
  which is negative $\P$-a.s. because
  $\varphi''_N(x)=\chi'_N(x)=N^{-1}\chi'(x/N) \leq 0$ for all $x \in
  \erre_+$. Moreover, the second term on the right-hand side of
  (\ref{eq:rata}) can be written as
  \begin{gather*}
  -2\int_0^t \chi_N(|u(s)|^2)\ip{Au(s)}{u(s)}\,ds
  + 2\int_0^t\!\int_Z \chi_N(|u(s-)|^2)\ip{u(s-)}{G(u(s-),z)}
    \,\bar{\mu}(ds,dz)\\
  + \int_0^t\!\int_Z \chi_N(|u(s-)|^2)|G(u(s-),z)|^2\,\mu(ds,dz).
  \end{gather*}
  Therefore, taking expectation on both sides of (\ref{eq:rata}),
  recalling that the compensator of $\mu$ is $\mathrm{Leb}\otimes m$,
  we are left with
  \begin{align*}
    \E\varphi_N(|u(t)|^2) &\leq \E\varphi_N(|x|^2)
    -2\E\int_0^t \chi_N(|u(s)|^2)\ip{Au(s)}{u(s)}\,ds\\
    &\quad + \E\int_0^t\!\int_Z \chi_N(|u(s)|^2)|G(u(s),z)|^2\,m(dz)\,ds.
  \end{align*}
  Recalling (\ref{eq:milch}), Tonelli's theorem yields
  \begin{align*}
    &\E\varphi_N(|u(t)|^2)
    + \alpha_1\int_0^t \E\chi_N(|u(s)|^2) |u(s)|_V^p\,ds\\
    &\qquad \leq \E\varphi_N(|x|^2)
    + \alpha_0\int_0^t \E\chi_N(|u(s)|^2) |u(s)|^2\,ds + tC.
  \end{align*}
  for all $t\geq0$. Integrating both sides with respect to $\nu$ on
  $H$, applying again Tonelli's theorem, the definition of invariant
  measure, and setting $t=1$, we obtain
  \begin{equation}     \label{eq:ced}
  \alpha_1 \int_H \chi_N(|x|^2) |x|_V^p\,\nu(dx) \leq
  \alpha_0 \int_H \chi_N(|x|^2) |x|^2\,\nu(dx) + C.
  \end{equation}
  By the inequality $\varepsilon^2|x|^2 \leq \varepsilon^p|x|^p+1$ and
  the continuity of $V \hookrightarrow H$, we have
  \[
  \int_H \chi_N(|x|^2) |x|^2\,\nu(dx) \leq
  \varepsilon^{p-2}c^p \int_H \chi_N(|x|^2) |x|_V^p\,\nu(dx)
  + \varepsilon^{-2},
  \]
  hence
  \[
  \int_H \chi_N(|x|^2) |x|^2\,\nu(dx) \leq \varepsilon^{-2}
  + \frac{\varepsilon^{p-2}c^p}{\alpha_1} \Big(
  \alpha_0 \int_H \chi_N(|x|^2) |x|^2\,\nu(dx) + C \Big).
  \]
  Choosing $\varepsilon$ sufficiently small we get
  \[
  \int_H \chi_N(|x|^2) |x|^2\,\nu(dx) \lesssim 1,
  \]
  thus also, by the monotone convergence theorem, $\int_H
  |x|^2\,\nu(dx)<\infty$. This immediately yields the result, in view
  of (\ref{eq:ced}).
\end{proof}

The estimates just established allow one to deduce the existence of an
ergodic invariant measure.
\begin{coroll}
  There exists an ergodic invariant measure for the semigroup $P_t$.
\end{coroll}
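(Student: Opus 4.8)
The plan is to invoke the standard ergodic-decomposition / Krylov-Bogoliubov machinery. We already know from Theorem \ref{thm:exim} that the set of invariant measures for $P_t$ is nonempty, and from Theorem \ref{thm:intega} that \emph{every} invariant measure $\nu$ satisfies the moment bound $\int_H(|x|_H^2+|x|_V^p)\,\nu(dx)<\infty$ and is concentrated on $V$. The existence of an \emph{ergodic} invariant measure will follow once we verify that the set of invariant measures is nonempty, convex, and weakly compact, so that it possesses an extreme point, and that extreme points of this set are precisely the ergodic measures.

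First I would recall the general principle (see e.g. \cite{DZ96}): for a Feller Markovian semigroup, the collection $\mathcal{I}$ of invariant probability measures is a convex set whose extreme points are exactly the ergodic invariant measures. Thus it suffices to produce an extreme point of $\mathcal{I}$, and by the Krein-Milman theorem this is guaranteed as soon as $\mathcal{I}$ is nonempty and compact in a suitable topology. Nonemptiness is Theorem \ref{thm:exim}. For compactness, the key input is the uniform moment estimate: by Theorem \ref{thm:intega} every $\nu\in\mathcal{I}$ satisfies $\int_H|x|_V^p\,\nu(dx)\leq M$ for a constant $M$ that, on inspection of the proof, depends only on the structural constants $\alpha_0,\alpha_1,C,c,p$ and not on the particular $\nu$. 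Exactly as in the tightness argument of Theorem \ref{thm:exim}, Markov's inequality together with the compactness of the embedding $V\hookrightarrow H$ then shows that $\mathcal{I}$ is a tight family, hence relatively compact in $\mathcal{M}_1(H)$ by Prohorov's theorem. Since the invariance condition $\int_H P_t\varphi\,d\nu=\int_H\varphi\,d\nu$ for $\varphi\in C_b^0(H)$ passes to weak limits (by the Feller property), $\mathcal{I}$ is also weakly closed, hence compact.

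Having established that $\mathcal{I}$ is a nonempty, convex, weakly compact subset of $\mathcal{M}_1(H)$, the Krein-Milman theorem provides an extreme point $\nu_{\mathrm{erg}}\in\mathcal{I}$, and by the characterization of extreme points just recalled, $\nu_{\mathrm{erg}}$ is ergodic. This completes the argument.

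\textbf{The main obstacle} is the verification that the moment bound of Theorem \ref{thm:intega} is genuinely \emph{uniform} over $\mathcal{I}$: the statement as phrased only asserts finiteness for each fixed invariant measure, so one must re-read its proof to confirm that the final constant in $\int_H|x|_V^p\,\nu(dx)\lesssim 1$ is independent of $\nu$ (which it is, since $\varepsilon$ is chosen purely in terms of the structural constants and the estimate \eqref{eq:ced} is homogeneous in $\nu$). The remaining technical point is the passage to the weak limit in the invariance identity, which is immediate for $\varphi\in C_b^0(H)$ thanks to the Feller property but would require more care for unbounded test functions; restricting to bounded continuous $\varphi$ circumvents this cleanly.
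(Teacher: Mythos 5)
Your proposal is correct and follows essentially the same route as the paper: the uniform (in $\nu$) moment bound extracted from Theorem \ref{thm:intega} and \eqref{eq:ced}, tightness of the set of invariant measures via Markov's inequality and the compact embedding $V\hookrightarrow H$, Prohorov's theorem, and then Krein--Milman together with the characterization of extreme points as ergodic measures. Your additional verification that the set of invariant measures is weakly closed (so that Krein--Milman applies to a genuinely compact set) is a point the paper glosses over with ``(relatively) compact,'' so this is a welcome refinement rather than a departure.
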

\begin{proof}
  The last estimate in the proof of the previous theorem and
  (\ref{eq:ced}) allow to conclude that there exists a constant $N$,
  independent of $\nu$, such that
  \[
  \int_H |x|_V^p\,\nu(dx) < N
  \]
  for any invariant measure $\nu$. Denoting by $\mathcal{N} \subset
  \mathcal{M}_1(H)$ the set of invariant measures of $P_t$, Markov's
  inequality yields
  \[
  \sup_{\nu\in\mathcal{N}} \nu(|x|_V>R) \leq
  \frac1{R^p}   \sup_{\nu\in\mathcal{N}} \int_H |x|_V^p\,\nu(dx)
  < \frac{N}{R^p} \xrightarrow{R\to+\infty} 0.
  \]
  Therefore, by the same argument used in the proof of Theorem
  \ref{thm:exim}, we conclude that $\mathcal{N}$ is tight, hence,
  thanks to Prohorov's theorem, (relatively) compact in
  $\mathcal{M}_1(H)$. Since $\mathcal{N}$ is non-empty and convex,
  Krein-Milman's theorem ensures that $\mathcal{N}$ has extreme
  points, which are ergodic invariant measures for $P_t$ by a
  well-known criterion (see e.g. \cite[thm.~19.25]{AliBor}).
\end{proof}

Finally, we give a sufficient condition for uniqueness of an invariant
measure under an extra superlinearity assumptions on the couple
$(A,G)$.
\begin{prop}
  Assume that there exist $\eta>0$ and $\delta>0$ such that
  \[
  2\ip{Av-Aw}{v-w} - |G(v,\cdot)-G(w,\cdot)|^2_m
  \geq \eta |v-w|_H^{2+\delta},
  \qquad\forall v,w\in V.
  \]
  Then $P_t$ has a unique strongly mixing invariant measure.
\end{prop}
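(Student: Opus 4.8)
The plan is to derive a decay estimate for the $H$-distance between two solutions that is \emph{uniform in the initial data}, and then to run the uniqueness/mixing argument of Theorem \ref{paulaner} with polynomial in place of exponential decay. Existence of an invariant measure $\nu$ is already provided by Theorem \ref{thm:exim}, so only uniqueness and mixing are at stake. First I would fix $x,y\in H$, couple the two solutions through the same noise, and set $w(t):=u(t,x)-u(t,y)$. Applying It\^o's formula for the square of the $H$-norm (as in Theorem \ref{thm:exim}, see \cite{KryGyo2}) to $|w(t)|_H^2$ and taking expectations, the martingale part vanishes and the compensator $\mathrm{Leb}\otimes m$ replaces $\mu$, so that, writing $f(t):=\E|w(t)|_H^2$,
\[
f(t) = f(0) - \E\int_0^t \big[ 2\ip{Au(r,x)-Au(r,y)}{w(r)} - |G(u(r,x),\cdot)-G(u(r,y),\cdot)|_m^2 \big]\,dr.
\]
The integrand is integrable by the a priori bound $\E\int_0^t|u(r,x)|_V^p\,dr<\infty$ from the well-posedness result, together with (\ref{eq:kofi}) and (\ref{weissbier}), so $f$ is absolutely continuous; the superlinearity hypothesis of the proposition bounds the bracket below by $\eta|w(r)|_H^{2+\delta}$, giving $f'(t)\leq -\eta\,\E|w(t)|_H^{2+\delta}$ for a.e. $t$.

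The decisive step is to close this into an autonomous inequality for $f$. As $s\mapsto s^{(2+\delta)/2}$ is convex on $\erre_+$ (its exponent exceeds $1$), Jensen's inequality yields $\E|w(t)|_H^{2+\delta}\geq f(t)^{1+\delta/2}$, whence $f'(t)\leq -\eta\,f(t)^{1+\delta/2}$ for a.e. $t$. This is a Bernoulli inequality: wherever $f>0$ one has $\frac{d}{dt}f(t)^{-\delta/2}\geq \tfrac{\delta\eta}{2}$, and integrating while discarding the nonnegative term $f(0)^{-\delta/2}$ gives
\[
\E|u(t,x)-u(t,y)|_H^2 = f(t) \leq \Big(\frac{2}{\delta\eta\,t}\Big)^{2/\delta}\qquad\forall t>0.
\]
The right-hand side does not depend on $x,y$, so by Jensen once more $\E|u(t,x)-u(t,y)|_H \lesssim_{\delta,\eta} t^{-1/\delta}$, uniformly over $x,y\in H$. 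This is the exact polynomial analogue of the exponential contraction (\ref{eq:trinka}) used in the strictly dissipative case.

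With this bound, uniqueness and mixing follow as in Theorem \ref{paulaner}. For $\varphi\in C_b^{0,1}(H)$ and any invariant measure $\nu$, invariance and the Lipschitz estimate give
\[
\Big| P_t\varphi(y) - \int\varphi\,d\nu \Big|
\leq |\varphi|_{\lip}\int \E|u(t,y)-u(t,w)|_H\,\nu(dw)
\lesssim_{\delta,\eta} |\varphi|_{\lip}\,t^{-1/\delta},
\]
so $P_t\varphi\to\int\varphi\,d\nu$ uniformly on $H$. Writing this for two invariant measures and integrating against one of them forces $\int\varphi\,d\nu_1=\int\varphi\,d\nu_2$ for all $\varphi\in C_b^{0,1}(H)$; since these functions are measure-determining, $\nu_1=\nu_2$. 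For strong mixing I would note that, for bounded Lipschitz $\varphi,\psi$, the uniform convergence above and dominated convergence give $\int(P_t\varphi)\psi\,d\nu\to\int\varphi\,d\nu\int\psi\,d\nu$; since $P_t$ is a contraction on $L_2(H,\nu)$ and bounded Lipschitz functions are dense there, this extends to all $\varphi,\psi\in L_2(H,\nu)$.

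I do not anticipate a serious obstacle. The only genuinely essential point --- and the sole place the superlinear exponent $\delta>0$ enters --- is the convexity that converts $(2+\delta)$-dissipativity into a Bernoulli inequality whose solution decays to zero uniformly in the initial condition. The remaining work is bookkeeping: justifying It\^o's formula and the expectation passage in the variational setting (as already done in the earlier proofs), checking integrability of the drift, and handling the degenerate case $f\equiv0$.
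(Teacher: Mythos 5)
Your proposal is correct and follows essentially the same route as the paper's proof: It\^o's formula for $|u(t,x)-u(t,y)|_H^2$, the superlinearity hypothesis, and Jensen's inequality produce the differential inequality $f'\leq -\eta f^{1+\delta/2}$ for $f(t)=\E|u(t,x)-u(t,y)|_H^2$, and its decay then yields uniqueness and strong mixing exactly as in the paper. The only (harmless) differences are that you integrate the Bernoulli inequality explicitly to get the initial-data-independent rate $f(t)\leq(2/(\delta\eta t))^{2/\delta}$, whereas the paper argues by comparison with the ODE $\zeta'=-\eta\zeta^{1+\delta/2}$ and then passes to the limit by dominated convergence (using $\E|u(t,x)-u(t,y)|^2\leq|x-y|^2$ and the finite moments of $\nu$), and that you phrase mixing via decay of correlations while the paper deduces pointwise convergence of $P_tf$ for $f\in L_2(H,\nu)$ by density; both variants are sound.
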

\begin{proof}
  Let $x,y\in H$. Then It\^o's formula for the square of the norm in
  $H$ implies, after taking expectations,
  \begin{align*}
    \E|u(t,x)-u(t,y)|^2 &+ 2\E\int_0^t
                             \ip{Au(s,x)-Au(s,y)}{u(s,x)-u(s,y)}\,ds\\
    &\leq |x-y|^2 + \E\int_0^t\int_Z|G(u(s,x),z)-G(u(s,y),z)|^2\,m(dz)\,ds\\
    &\leq |x-y|^2 - \eta \int_0^t \E|u(s,x)-u(s,y)|^{2+\delta}\,ds\\
    &\leq |x-y|^2 - \eta \int_0^t \big(\E|u(s,x)-u(s,y)|^2\big)^{1+\delta/2}\,ds
  \end{align*}
  for all $t>0$, where we have used Jensen's inequality in the last
  step. Since the solution $\zeta:\erre_+\to\erre_+$ of the ordinary
  differential equation
  \[
  \zeta' = -\eta \zeta^{1+\delta/2}, \qquad \zeta(0)=|x-y|^2
  \]
  is such that $\lim_{t\to\infty} \zeta(t)=0$ for all $x$, $y \in H$,
  we conclude by a standard comparison argument that
  $\E|u(t,x)-u(t,y)|^2 \to 0$ as $t\to\infty$.

  Let $\nu$ be an invariant measure for $P_t$. Then for any $f \in
  C^{0,1}_b(H)$ we have
  \begin{align*}
    \Big| P_tf(x) - \int_H f\,d\nu \Big|
    &= \Big| \int_H P_tf(x)\,\nu(dy) - \int_H P_tf(y)\,\nu(dy) \Big|\\
    &\leq \int_H |P_tf(x)-P_tf(y)|\,\nu(dy)\\
    &\leq |f|_{\lip} \int_H \big(\E|u(t,x)-u(t,y)|^2\big)^{1/2}\,\nu(dy).
  \end{align*}
  Since $(\E|u(t,x)-u(t,y)|^2)^{1/2} \leq |x-y|$ and $\int_H
  |x-y|\,\nu(dy)<\infty$, we can pass to the limit under the integral
  sign as $t \to \infty$ by the dominated convergence theorem, thus
  concluding that $|P_tf(x)-\int_Hf\,d\nu| \to 0$ as $t\to \infty$,
  and in particular that $\nu$ is the unique invariant measure.
  Moreover, since $C^1_b(H)$ is dense in $L_2(H,\nu)$, one has that
  for any $f \in L_2(H,\nu)$,
  \[
  \lim_{t \to \infty} P_tf(x) = \int_H f\,d\nu,\qquad x\in H,
  \]
  i.e. $\nu$ is strongly mixing (in particular ergodic) as required.
\end{proof}

%------------------------------------------------------------------

\section{Essential $m$-dissipativity of the Kolmogorov
operator}\label{sec:Kol}
Denoting by $u(\cdot,x)$ the solution to the stochastic equation
(\ref{eq:caro}), we have proved in the previous section that the
semigroup
\[
P_tf(x) := \E f(u(t,x)), \qquad f \in B_b(H)
\]
admits a (not necessarily unique) invariant measure $\nu$. As is
well-known, $P_t$ can be extended to a strongly continuous Markovian
semigroup of contractions on $L_p(H,\nu)$, $p \geq 1$. In the
following we shall denote the extension of $P_t$ to $L_p(H,\nu)$
again by $P_t$.

Let us define the operator $(L,D(L))$ in $L_1(H,\nu)$ by
\begin{align*}
Lf(x) &= -\ip{Ax}{Df(x)} + \mathcal{I}f(x), \qquad x \in V,\\
\mathcal{I}f(x) &= \int_Z \big[
f(x+G(z)) - f(x) - \ip{Df(x)}{G(z)} \big]\,m(dz),\\
D(L) &= \big\{ f \in C^2_b(H) \cap C^1_b(V): \; \ip{Ax}{Df(x)} \in L_1(H,\nu)
\big\}.
\end{align*}
Note that the nonlocal term $\mathcal{I}f$ in the definition of $L$ is
a well-defined element of $L_1(H,\nu)$ for $f \in C^{1,1}_b(H)$. In fact,
the fundamental theorem of calculus yields
\begin{align*}
\big| f(x+G(z))&-f(x)-\ip{Df(x)}{G(z)} \big|\\
&\leq \Big| \int_0^1 \ip{Df(x+\theta G(z))}{G(z)}\,d\theta
           - \ip{Df(x)}{G(z)} \Big|
\leq |Df|_{\lip} |G(z)|^2
\end{align*}
therefore, since $G \in L_2(Z,m)$, we have that $|\mathcal{I}f|
\lesssim 1$, thus also $\mathcal{I}f \in L_1(H,\nu)$.

By a computation based on It\^o's formula one can see that the
infinitesimal generator of $P_t$ in $L_1(H,\nu)$ acts on smooth enough
functions as the operator $L$ just defined.  Since $P_t$ is a
contraction for all $t \geq 0$, we have that $(L,D(L))$ is dissipative
in $L_1(H,\nu)$. The question of $L_1$-uniqueness then arises
naturally: is $P_t$ the only strongly continuous semigroup on
$L_1(H,\nu)$ such that its infinitesimal generator extends $(L,D(L))$?
Under a ``regularizability'' hypothesis on $A$, we shall give an
affirmative answer to this question, proving that the closure of $L$
in $L_1(H,\nu)$ generates a strongly continuous semigroup. In fact,
since $L$ is dissipative, this will imply that the semigroup coincides
with $P_t$.

Throughout this section we shall assume that there exists a sequence
of monotone operators $A^\varepsilon \in \dot{C}^{0,1}(H \to H) \cap
C^1_b(V \to V')$ such that $A^\varepsilon x \to Ax$ in $V'$ for all $x
\in V$ and $|A^\varepsilon x|_{V'} \leq N(|x|_V^{p-1}+1)$ with $N$
independent of $\varepsilon$.

We are going to prove that $L$ is dissipative in $L_1(H,\nu)$
just assuming that $\nu$ is an infinitesimally invariant for $L$
satisfying the integrability condition
\begin{equation}     \label{eq:ippo}
x \mapsto |x|_V^p + |x|_H \in L_1(H,\nu).
\end{equation}
More precisely, the assumption of $\nu$ being infinitesimally
invariant amounts to assuming that
\[
\int_H Lf\,d\nu = 0 \qquad \forall f \in \mathcal{K},
\]
where $\mathcal{K}:=C^{1,1}_b(H) \cap C^1_b(V')$.  Note that
\eqref{eq:ippo} and $f \in \mathcal{K}$ imply that $Lf \in
L_1(H,\nu)$, so that the above condition is meaningful. In fact, one
has $\mathcal{I}f \in L_1(H,\nu)$ for all $f \in C^{1,1}_b(H)$, as seen
above, and
\[
|\ip{Ax}{Df(x)}| \leq |Ax|_{V'} \sup_{y\in V}|Df(y)| \lesssim |x|_V^p + 1
\in L_1(H,\nu).
\]
Let us recall that any invariant measure is infinitesimally invariant,
but the converse does not hold, in general. Moreover, any invariant
measure for (\ref{eq:caro}) satisfies the integrability condition
(\ref{eq:ippo}) thanks to Theorem \ref{thm:intega}.
\begin{lemma}
  The operator $(L,D(L))$ is dissipative, hence closable, in $L_1(H,\nu)$.
\end{lemma}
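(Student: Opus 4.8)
The plan is to argue directly from the \emph{infinitesimal} invariance of $\nu$, not from the contraction property of $P_t$ on $L_1(H,\nu)$: the latter route would require $\nu$ to be a genuine invariant measure (so that $P_t$ extends to a contraction semigroup whose generator restricts to $L$), whereas here we only want to assume infinitesimal invariance. Recall that $(L,D(L))$ is dissipative in $L_1(H,\nu)$ as soon as, for every $f\in D(L)$, one has $\int_H (Lf)\,\mathrm{sgn}(f)\,d\nu\le 0$, where $\mathrm{sgn}(f):=f/|f|$ on $\{f\neq0\}$ and $\mathrm{sgn}(f):=0$ on $\{f=0\}$. Indeed, testing $\lambda f-Lf$ against $\mathrm{sgn}(f)\in L_\infty$, $|\mathrm{sgn}(f)|\le1$, gives
\[
\|\lambda f-Lf\|_{L_1}\ \ge\ \int_H(\lambda f-Lf)\,\mathrm{sgn}(f)\,d\nu
\ =\ \lambda\|f\|_{L_1}-\int_H(Lf)\,\mathrm{sgn}(f)\,d\nu,
\]
so the whole matter reduces to establishing this one sign inequality.

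The tool is a chain rule with a nonnegative remainder. For a smooth convex $\Phi:\erre\to\erre$ and $f\in D(L)$ the local part obeys the exact chain rule, $-\ip{Ax}{D(\Phi\circ f)(x)}=\Phi'(f(x))\,\bigl(-\ip{Ax}{Df(x)}\bigr)$, while the nonlocal part yields
\[
\mathcal{I}(\Phi\circ f)(x)-\Phi'(f(x))\,\mathcal{I}f(x)
=\int_Z\bigl[\Phi(f(x+G(z)))-\Phi(f(x))-\Phi'(f(x))\bigl(f(x+G(z))-f(x)\bigr)\bigr]\,m(dz),
\]
whose integrand is nonnegative by convexity of $\Phi$. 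Hence $L(\Phi\circ f)=\Phi'(f)\,Lf+R_\Phi(f)$ with $R_\Phi(f)\ge0$. I would then take even convex $\Phi_\delta\in C^\infty$ with $|\Phi_\delta'|\le1$ and $\Phi_\delta'\to\mathrm{sgn}$ pointwise (e.g. $\Phi_\delta(r)=\sqrt{r^2+\delta^2}-\delta$), apply infinitesimal invariance to $\Phi_\delta\circ f$ to get $\int_H L(\Phi_\delta\circ f)\,d\nu=0$, and discard the nonnegative remainder to obtain $\int_H \Phi_\delta'(f)\,Lf\,d\nu=-\int_H R_{\Phi_\delta}(f)\,d\nu\le0$. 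Passing to the limit $\delta\to0$ by dominated convergence—legitimate since $|\Phi_\delta'(f)|\le1$ and $Lf\in L_1(H,\nu)$, the latter guaranteed by \eqref{eq:ippo} together with the bounds on $\mathcal{I}f$ and $\ip{Ax}{Df(x)}$ recorded before the statement—gives $\int_H\mathrm{sgn}(f)\,Lf\,d\nu\le0$, i.e. dissipativity. Closability then follows from the standard fact that a densely defined dissipative operator is closable (if $f_n\to0$ with $Lf_n\to g$, the limiting resolvent inequality, read along $\beta\to0$, forces $\|w-g\|_{L_1}\ge\|w\|_{L_1}$ for all $w$ by density, whence $g=0$ on taking $w=g$); density of $D(L)$ is clear since it contains the smooth bounded cylinder functions, which exhaust $L_1(H,\nu)$.

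The delicate step—and the one I expect to be the main obstacle—is verifying that $\Phi_\delta\circ f$ is an admissible test function for infinitesimal invariance, i.e. that it lies in the class $\mathcal{K}=C^{1,1}_b(H)\cap C^1_b(V')$ on which $\int_H L(\cdot)\,d\nu=0$ is postulated. Composition with the smooth, bounded-derivative $\Phi_\delta$ preserves the $C^{1,1}_b(H)$ regularity, but $D(L)$ only controls the drift pairing along $V$, so $\Phi_\delta\circ f$ need not be $C^1_b(V')$. I would bridge this gap by exploiting that $\nu$ is concentrated on $V$ (Theorem \ref{thm:intega} and $\nu(V)=1$) together with the regularizing sequence $A^\varepsilon$ assumed at the start of the section: run the identity for the smoothed operators $L^\varepsilon$ obtained by replacing $A$ with $A^\varepsilon\in\dot{C}^{0,1}(H\to H)$, for which the composition causes no regularity trouble, and then let $\varepsilon\to0$ using $A^\varepsilon x\to Ax$ in $V'$ and the uniform growth bound $|A^\varepsilon x|_{V'}\lesssim|x|_V^{p-1}+1$, again dominated by \eqref{eq:ippo}. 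Once the sign inequality is secured on all of $D(L)$, dissipativity and hence closability follow as above.
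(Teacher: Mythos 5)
Your core argument is precisely the paper's: smooth convex approximations $\Phi_\delta$ of the signum, the identity $L(\Phi_\delta\circ f)=\Phi_\delta'(f)\,Lf+R_{\Phi_\delta}(f)$ with remainder $R_{\Phi_\delta}(f)\geq 0$ by convexity, integration against $\nu$ using infinitesimal invariance to kill the left-hand side, and passage to the limit $\delta\to 0$ by dominated convergence (legitimate by \eqref{eq:ippo}); your reduction of $L_1$-dissipativity to the sign inequality is just an unwound version of the paper's appeal to the duality map of $L_1(H,\nu)$. Up to the admissibility question, this is correct and matches the paper's proof step for step.

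The genuine gap is in your proposed repair of the admissibility problem. You are right that for $f$ in the stated domain $D(L)\subset C^2_b(H)\cap C^1_b(V)$ the composition $\Phi_\delta\circ f$ need not lie in $\mathcal{K}=C^{1,1}_b(H)\cap C^1_b(V')$, which is the only class on which $\int_H L(\cdot)\,d\nu=0$ is postulated. But the detour through the regularized operators cannot close this gap: infinitesimal invariance is an assumption about $L$, i.e.\ about the drift $A$, and nothing is assumed (nor is it true in general) about $\int_H L^\varepsilon g\,d\nu$ when the drift is replaced by $A^\varepsilon$, so the identity you want to ``run for $L^\varepsilon$'' has no foundation. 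Moreover, smoothing $A$ is beside the point: whether $\Phi_\delta\circ f$ belongs to $C^1_b(V')$ is a property of $f$ alone, not of the drift, so the obstruction is untouched; and if you tried to transfer invariance from $L$ to $L^\varepsilon$ by comparing the two, you would need $\int_H L(\Phi_\delta\circ f)\,d\nu=0$, which is exactly the admissibility you are trying to avoid. The paper sidesteps the issue rather than solving it: its proof establishes the dissipativity inequality only for $f\in\mathcal{K}$, where $\gamma_\varepsilon(f)\in\mathcal{K}$ is immediate because composition with a convex $C^2$ function having bounded derivative preserves both $C^{1,1}_b(H)$ and $C^1_b(V')$ (note that for $f\in C^1_b(V')$ one has $Df(x)\in V''\simeq V$, so the pairing $\ip{Ax}{Df(x)}$ makes sense and is in $L_1(H,\nu)$ by \eqref{eq:ippo}). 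In other words, the mismatch you detected between the lemma's $D(L)$ and the class of admissible test functions is real, but the paper's resolution is to work tacitly with $\mathcal{K}$ as the operative domain --- which suffices for the sequel, since the range condition in the $m$-dissipativity theorem is also verified on elements of $\mathcal{K}$. Your proof becomes correct, and essentially identical to the paper's, if you state and prove the sign inequality for $f\in\mathcal{K}$ and drop the $A^\varepsilon$ detour.
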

\begin{proof}
  Let $f \in \mathcal{K}$ and $\gamma_\varepsilon \in C^2(\erre)$ be a
  convex function with such that $\gamma'_\varepsilon$ is a smooth
  approximation of the signum graph
  \[
  \mathrm{sgn}(x) =
  \begin{cases}
    -1, & x<0,\\
    [-1,1], & x=0,\\
    1, & x>0.
  \end{cases}
  \]
  Then we have $\gamma_\varepsilon(f) \in \mathcal{K}$ and
  \begin{equation}     \label{eq:chino}
  L\gamma_\varepsilon(f) = \ip{Ax}{Df}\gamma'_\varepsilon(f) 
  + \mathcal{I}\gamma_\varepsilon(f),
  \end{equation}
  where, by a direct calculation,
  \begin{align*}
  &\mathcal{I}\gamma_\varepsilon(f) - \gamma'_\varepsilon(f) \mathcal{I}f\\
  &\qquad = \int_Z \big[\gamma_\varepsilon(f(x+G(z))) - \gamma_\varepsilon(f(x))
    - \gamma'_\varepsilon(f(x))\big(f(x+G(z))-f(x)\big)\big]\,m(dz)\\
  &\qquad =: R_\varepsilon(f).
  \end{align*}
  Since $\gamma_\varepsilon$ is convex and differentiable, we infer
  that $R_\varepsilon(f) \geq 0$. Therefore, taking the previous
  inequality into account and the infinitesimal invariance of $\nu$,
  one has, integrating (\ref{eq:chino}) with respect to $\nu$,
  \[
  \int L\gamma_\varepsilon(f)\,d\nu = 0 = \int \gamma'_\varepsilon(f)\,Lf\,d\nu
  + \int R_\varepsilon(f)\,d\nu,
  \]
  hence $\int \gamma'_\varepsilon(f)\,Lf\,d\nu \leq 0$, and passing to
  the limit as $\varepsilon \to 0$,
  \[
  \int Lf\,\xi\,d\nu \leq 0,
  \]
  where $\xi \in L_\infty(H,\nu)$, $\xi \in \mathrm{sgn}(f)$ $\nu$-a.e.
  Since $L_1(H,\nu)'=L_\infty(H,\nu)$, recalling that the duality map
  $J:L_1(H,\nu) \to 2^{L_\infty(H,\nu)}$ is given by
  \[
  J: u \mapsto \big\{ v \in L_\infty(H,\nu): \;
  v \in |u|_{L_1(H,\nu)} \mathrm{sgn}(u) \quad \nu\text{-a.e.} \big\},
  \]
  we infer by the previous inequality that $L$ is dissipative in
  $L_1(H,\nu)$.
\end{proof}
The following result gives a positive answer to the $L_1$-uniqueness
question posed above.
\begin{thm}
  Let $(\bar{L},D(\bar{L}))$ be the closure of the Kolmogorov operator
  $L$ in $L_1(H,\nu)$. Then $(\bar{L},D(\bar{L}))$ generates a strongly
  continuous Markovian semigroup of contractions $T_t$ in
  $L_1(H,\nu)$, for which $\nu$ is an invariant measure.
\end{thm}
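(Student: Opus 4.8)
The plan is to invoke the Lumer--Phillips theorem. Since the preceding lemma shows that $(L,D(L))$ is dissipative (hence closable) and $D(L)$ is dense in $L_1(H,\nu)$, it suffices to verify the range condition: that $\mathrm{Range}(\lambda-L)$ is dense in $L_1(H,\nu)$ for some $\lambda>0$. Granting this, $(\bar L,D(\bar L))$ is $m$-dissipative and generates a strongly continuous contraction semigroup $T_t$. Moreover, the infinitesimal generator $(G,D(G))$ of $P_t$ in $L_1(H,\nu)$ extends $(L,D(L))$ and is itself $m$-dissipative (being the generator of a contraction semigroup), so the range condition forces $\bar L=G$ and thus $T_t=P_t$; Markovianity and the invariance of $\nu$ are then inherited from $P_t$. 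Alternatively, invariance of $\nu$ for $T_t$ follows by extending $\int Lf\,d\nu=0$ to the closure and writing $\frac{d}{dt}\int T_tf\,d\nu=\int \bar L T_tf\,d\nu=0$.

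To prove the range condition I would use the regularized operators $A^\varepsilon$. For $g$ in the dense class $\mathcal{K}=C^{1,1}_b(H)\cap C^1_b(V')$, set $f^\varepsilon:=\int_0^\infty e^{-\lambda t}P^\varepsilon_tg\,dt$, where $P^\varepsilon_t$ is the semigroup attached to equation (\ref{eq:caro}) with $A$ replaced by $A^\varepsilon$; then $\lambda f^\varepsilon-L^\varepsilon f^\varepsilon=g$, with $L^\varepsilon f=-\ip{A^\varepsilon x}{Df}+\mathcal{I}f$. Because $A^\varepsilon$ is monotone and the noise $G$ does not depend on $u$, the noise terms cancel in the difference of two solutions, so $\frac{d}{dt}|u^\varepsilon(t,x)-u^\varepsilon(t,y)|_H^2\le 0$ pathwise, whence $|u^\varepsilon(t,x)-u^\varepsilon(t,y)|_H\le|x-y|_H$ and therefore the uniform Lipschitz bound $|Df^\varepsilon(x)|_H\le\frac1\lambda|Dg|_\infty$. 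Together with $|f^\varepsilon|_\infty\le\frac1\lambda|g|_\infty$ and the higher regularity inherited from $g$ and the smoothness of $A^\varepsilon$, this should place $f^\varepsilon\in D(L)$.

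The decisive point is that $f^\varepsilon$ is an \emph{approximate} solution of the resolvent equation for $L$. Since $L^\varepsilon-L=\ip{(A-A^\varepsilon)x}{D\,\cdot}$, one has
\[
\lambda f^\varepsilon-Lf^\varepsilon=g+r^\varepsilon,\qquad r^\varepsilon(x):=\ip{(A-A^\varepsilon)x}{Df^\varepsilon(x)},
\]
so it remains to show $r^\varepsilon\to0$ in $L_1(H,\nu)$. As $\nu(V)=1$ and $A^\varepsilon x\to Ax$ in $V'$ for every $x\in V$, one has $(A-A^\varepsilon)x\to0$ in $V'$ for $\nu$-a.e.\ $x$; combining the growth bounds on $A$ and $A^\varepsilon$ gives $|(A-A^\varepsilon)x|_{V'}\lesssim|x|_V^{p-1}+1$. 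I would then estimate, by Hölder's inequality with exponents $p/(p-1)$ and $p$,
\[
\int_H|r^\varepsilon|\,d\nu\le\Big(\int_H|(A-A^\varepsilon)x|_{V'}^{p/(p-1)}\,\nu(dx)\Big)^{(p-1)/p}\Big(\int_H|Df^\varepsilon(x)|_V^{p}\,\nu(dx)\Big)^{1/p}.
\]
The first factor tends to $0$ by dominated convergence, its integrand being bounded by a multiple of $|x|_V^p\in L_1(H,\nu)$ thanks to (\ref{eq:ippo}).

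Everything thus reduces to the uniform bound $\int_H|Df^\varepsilon|_V^p\,d\nu\lesssim1$, and this integrated $V$-gradient estimate is the main obstacle. The monotonicity argument only yields $H$-contractivity of the flow, hence only $H$-control of $Df^\varepsilon$; to upgrade this to the $V$-norm uniformly in $\varepsilon$ one must instead exploit the coercivity and growth conditions (\ref{eq:milch})--(\ref{eq:kofi}) together with the infinitesimal invariance of $\nu$ (for instance by differentiating the resolvent identity, or by an energy estimate for the linearized flow tested in $V$ and integrated against $\nu$). Granting this estimate, $r^\varepsilon\to0$ in $L_1(H,\nu)$, so each such $g$ lies in $\overline{\mathrm{Range}(\lambda-L)}$; since these $g$ are dense, the range condition holds and the proof concludes as described in the first paragraph.
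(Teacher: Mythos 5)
Your overall architecture --- Lumer--Phillips, a regularized resolvent $f^\varepsilon=(\alpha-L^\varepsilon)^{-1}g$ built from the semigroup of the regularized equation, and convergence of the commutator $\ip{(A-A^\varepsilon)x}{Df^\varepsilon}$ to zero in $L_1(H,\nu)$ --- is exactly the paper's. But the step you isolate as ``the main obstacle'' and then grant is precisely the heart of the paper's proof, and your suggested substitutes would not deliver it. The missing idea is this: because $G$ does not depend on $u$, the linearized flow $v^y(t,x):=Du_{\varepsilon\lambda}(t,x)[y]$ solves the \emph{pathwise deterministic} equation \eqref{eq:deri}, with no stochastic integral; one can therefore regard it as an evolution in $V'$ and pair it with $J(v^y(t,x))$, where $J:V'\to V''\simeq V$ is the duality map (this is where the strict convexity of $V$ and $V'$, arranged via Asplund renorming at the outset, enters). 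Positivity of $(A^{\varepsilon\lambda})'$ then yields the contraction property $|v^y(t,x)|_{V'}\leq|y|_{V'}$ for all $t$, and feeding this into $Df_{\varepsilon\lambda}(x)[y]=\E\int_0^\infty e^{-\alpha t}D\varphi(u_{\varepsilon\lambda}(t,x))v^y(t,x)\,dt$, with $\varphi$ viewed as an element of $C^1_b(V')$, shows that $Df_{\varepsilon\lambda}(x)$ is a bounded functional on $V'$, i.e. one gets the \emph{pointwise} bound $|Df_{\varepsilon\lambda}(x)|_V\lesssim 1$, uniformly in $x$, $\varepsilon$, $\lambda$. With this, no H\"older step and no integrated gradient estimate are needed: $\int_H|\ip{Ax-A^{\varepsilon\lambda}x}{Df_{\varepsilon\lambda}}|\,d\nu\lesssim\int_H|Ax-A^{\varepsilon\lambda}x|_{V'}\,d\nu\to 0$ by dominated convergence, using the growth bounds and \eqref{eq:ippo}. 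Your proposed routes to $\int|Df^\varepsilon|_V^p\,d\nu\lesssim 1$ (differentiating the resolvent identity, or an energy estimate for the linearized flow ``tested in $V$'') founder on the fact that the coercivity \eqref{eq:milch} controls the nonlinear solution in $V$, not its derivative with respect to the initial datum; it is the dual, $V'$-contractivity argument that actually works.

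Two further points. First, a single regularization $A^\varepsilon$ does not suffice to carry out your construction: $A^\varepsilon$ is only Lipschitz from $H$ to $H$, so neither the Fr\'echet differentiability of $x\mapsto u^\varepsilon(t,x)$ nor the Lipschitz continuity of $Df^\varepsilon$ (needed to place $f^\varepsilon$ in $\mathcal{K}=C^{1,1}_b(H)\cap C^1_b(V')$, the class on which $L$ and the infinitesimal invariance of $\nu$ are tested) is available. The paper introduces a second, Gaussian smoothing $A^{\varepsilon\lambda}\in C^\infty(H\to H)$ with $(A^{\varepsilon\lambda})'\in C^\infty_b$, and takes iterated limits $\lambda\to 0$, then $\varepsilon\to 0$; the boundedness and Lipschitz continuity of $(A^{\varepsilon\lambda})'$ is also what legitimizes your Gronwall-type argument for $Df\in\dot{C}^{0,1}$. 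Second, your primary route to Markovianity and invariance --- identifying $\bar{L}$ with the $L_1(H,\nu)$-generator of $P_t$, hence $T_t=P_t$ --- presupposes that $\nu$ is an actual invariant measure of $P_t$, whereas the theorem is proved assuming only that $\nu$ is infinitesimally invariant for $L$ and satisfies \eqref{eq:ippo}; in that generality $P_t$ need not even act on $L_1(H,\nu)$. The paper instead proves Markovianity directly, checking $\int_H\bar{L}f\,1_{\{f>1\}}\,d\nu\leq 0$ by a convex-approximation argument and density; your alternative argument for invariance (extending $\int Lf\,d\nu=0$ to $D(\bar{L})$ and differentiating $t\mapsto\int T_tf\,d\nu$) is the one the paper uses and is fine.
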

\begin{proof}
  By the Lumer-Phillips theorem, $\bar{L}$ generates a strongly
  continuous semigroup of contractions if $R(\alpha I-\bar{L})$ is
  dense in $L_1(H,\nu)$ for some $\alpha>0$.

  Consider the regularized equation
  \begin{equation}     \label{eq:reg}
  du(t) + A^{\varepsilon\lambda} u\,dt = \int_Z G(z)\,d\bar{\mu}(dt,dz),
  \qquad u(0)=x \in H,
  \end{equation}
  with
  \[
  A^{\varepsilon\lambda} x := \int_H e^{\lambda C} A^\varepsilon(e^{\lambda C}x+y)
  \gamma_{\frac{1}{2}C^{-1}(e^{2\lambda C}-1)}(dy), \qquad \lambda>0,
  \]
  where $C: D(C) \subset V \to H$ is a self-adjoint, negative definite
  linear operator such that $C^{-1}$ is of trace class, and $\gamma_Q$
  stands for a centered Gaussian measure on $H$ with covariance
  operator $Q$.  Then, by the Cameron-Martin formula, one has
  \[
  A^{\varepsilon\lambda} \in C^\infty(H \to H),
  \qquad
  (A^{\varepsilon\lambda})' \in C_b^\infty(H \to \mathcal{L}(H\to H))
  \]
  and $A^{\varepsilon\lambda}x \to A^\varepsilon x$ for all $x \in H$
  as $\lambda \to 0$ (see e.g. \cite[{\S}2.3-2.4]{DP-K} for
  details). Moreover, $A^{\varepsilon\lambda}$ inherits the
  monotonicity of $A^\varepsilon$, and
  \[
  (A^{\varepsilon\lambda})' x = \int_H e^{\lambda C}
  (A^\varepsilon)'(e^{\lambda C}x+y) e^{\lambda C}
  \gamma_{\frac{1}{2}C^{-1}(e^{2\lambda C}-1)}(dy),
  \]
  so that $A^{\varepsilon\lambda} \in C^1_b(V \to V')$.

  Since $A^{\varepsilon\lambda}$ is Lipschitz continuous on $H$, (\ref{eq:reg})
  admits a unique strong solution $u_{\varepsilon\lambda}$ (e.g. by
  \cite[thm.~34.7]{Met}). Set
  \begin{equation}     \label{eq:resolv}
  f_{\varepsilon\lambda}(x) := \E\int_0^\infty e^{-\alpha t}
  \varphi(u_{\varepsilon\lambda}(t,x))\,dt,\qquad x\in H,
  \end{equation}
  where $\varphi \in \mathcal{K}$ and $\alpha>0$ are fixed.
  Since $A^{\varepsilon\lambda} \in C^1(H \to H)$, one has, thanks to
  \cite[thm.~36.9]{Met}, that $x \mapsto u_{\varepsilon\lambda}(t,x)$ is
  Fr\'echet differentiable for all $t \geq 0$, and its Fr\'echet
  derivative acting on an arbitrary $y \in H$, denoted by
  $v^y_{\varepsilon\lambda}:=Du_{\varepsilon\lambda}[y]$, solves the initial value
  problem (in the $\P$-a.s. sense)
  \begin{equation}     \label{eq:deri}
  \frac{d}{dt}v^y_{\varepsilon\lambda}
     + (A^{\varepsilon\lambda})'(u_{\varepsilon\lambda})
  v^y_{\varepsilon\lambda}=0,
  \qquad v^y_{\varepsilon\lambda}(0,x)=y.
  \end{equation}
  A computation based on It\^o's lemma for the square of the norm and
  the monotonicity of $A^{\varepsilon\lambda}$ reveals that $x \mapsto
  u_{\varepsilon\lambda}(\cdot,x) \in \dot{C}^{0,1}(H \to \mathbb{H}_2(T))$ for
  all $T\geq 0$, and
  \[
  \big| x \mapsto u_{\varepsilon\lambda}(t,x) \big|_{\dot{C}^{0,1}(H \to H)}
  \leq 1 
  \qquad \forall t \geq 0.
  \]
  This immediately implies that $|v^y_{\varepsilon\lambda}| \leq |y|$ for all
  $y \in H$, as the operator norm of the Fr\'echet derivative of a
  Lipschitz continuous function cannot exceed its Lipschitz constant.
  Moreover, since $(A^{\varepsilon\lambda})'(\xi) \in C^0_b(H \to H)$ for all
  $\xi \in H$, from (\ref{eq:deri}) we infer that $x \mapsto
  u_{\varepsilon\lambda}(t,x)$ is continuously differentiable $\P$-a.s. for
  all $t\geq 0$ (e.g. by \cite[{\S}X.8]{dieudonne}). Applying the
  chain rule for Fr\'echet derivatives (see e.g.
  \cite[Prop.~1.4]{AmbPro}) in (\ref{eq:resolv}), taking into account
  that $\varphi \in C^{1,1}_b(H)$ and $u_{\varepsilon\lambda}$ is Fr\'echet
  differentiable with $|Du_{\varepsilon\lambda}(t)|$ bounded uniformly over
  $t$, we get
  \begin{equation}     \label{eq:diffres}
  Df_{\varepsilon\lambda}(x)[y] = \E\int_0^\infty e^{-\alpha t}
  D\varphi(u_{\varepsilon\lambda}(t,x)) v^y_{\varepsilon\lambda}(t,x)\,dt
  \end{equation}
  for all $y \in H$, which also immediately yields
  \begin{equation}     \label{eq:limuf}
  \big| Df_{\varepsilon\lambda}(x)[y] \big| \lesssim |y|
  \qquad \forall y \in H,
  \end{equation}
  that is $f_{\varepsilon\lambda} \in C^1_b(H)$. In order to conclude
  that $f_{\varepsilon\lambda} \in C^{1,1}_b(H)$ we thus have to prove
  that $Df_{\varepsilon\lambda} \in \dot{C}^{0,1}(H \to H)$. Let us
  observe that we can write
  \begin{align*}
  & \big| Df_{\varepsilon\lambda}(x)[y] - Df_{\varepsilon\lambda}(x)[z] \big|\\
  &\qquad\quad \leq \E\int_0^\infty e^{-\alpha t} \big|
  D\varphi(u_{\varepsilon\lambda}(t,x)) v_{\varepsilon\lambda}^y(t,x) 
  - D\varphi(u_{\varepsilon\lambda}(t,z))v_{\varepsilon\lambda}^y(t,z)
  \big|\,dt\\
  &\qquad\quad \leq \E\int_0^\infty e^{-\alpha t} \big|
  D\varphi(u_{\varepsilon\lambda}(t,x)) v_{\varepsilon\lambda}^y(t,x)
  - D\varphi(u_{\varepsilon\lambda}(t,x))v_{\varepsilon\lambda}^y(t,z)
  \big|\,dt\\
  &\qquad\quad\quad + \E\int_0^\infty e^{-\alpha t} \big|
  D\varphi(u_{\varepsilon\lambda}(t,x)) v_{\varepsilon\lambda}^y(t,z)
  - D\varphi(u_{\varepsilon\lambda}(t,z))v_{\varepsilon\lambda}^y(t,z) \big|\,dt,
  \end{align*}
  where, recalling that $x \mapsto u_{\varepsilon\lambda}(t,x)$ and
  $v_{\varepsilon\lambda}(t)$ are respectively Lipschitz and bounded
  uniformly over $\varepsilon$, $\lambda$ and $t$, and that $\varphi \in
  C^{1,1}_b(H)$,
  \begin{align*}
    \big| D\varphi(u_{\varepsilon\lambda}(t,x)) v_{\varepsilon\lambda}^y(t,z) -
    D\varphi(u_{\varepsilon\lambda}(t,z))v_{\varepsilon\lambda}^y(t,z) \big|
    &\leq
    |D\varphi|_{\lip} |u_{\varepsilon\lambda}(t,x)-u_{\varepsilon\lambda}(t,z)| \,
    |v_{\varepsilon\lambda}^y(t,z)|\\
    &\lesssim |x-z| \, |y|.
  \end{align*}
  Moreover, we also have
  \begin{align*}
    \big| D\varphi(u_{\varepsilon\lambda}(t,x)) v_{\varepsilon\lambda}^y(t,x) -
    D\varphi(u_{\varepsilon\lambda}(t,x))v_{\varepsilon\lambda}^y(t,z) \big| \leq
    |D\varphi|_{C^0(H\to H)} |v_{\varepsilon\lambda}^y(t,x) -
    v_{\varepsilon\lambda}^y(t,z)|,
  \end{align*}
  from which it follows that in order to show that $Df_{\varepsilon\lambda}$ is
  Lipschitz on $H$ it suffices to prove that $x \mapsto
  v_{\varepsilon\lambda}(t,x)$ is Lipschitz on $H$. We have
  \[
  \frac{d}{dt} \big(v_{\varepsilon\lambda}^y(t,x)
               - v_{\varepsilon\lambda}^y(t,z) \big)
  + (A^{\varepsilon\lambda})'(u_{\varepsilon\lambda}(t,x))
                            v_{\varepsilon\lambda}^y(t,x)
  - (A^{\varepsilon\lambda})'(u_{\varepsilon\lambda}(t,z))
    v_{\varepsilon\lambda}^y(t,z) = 0,
  \]
  hence, taking scalar products with $v_{\varepsilon\lambda}^y(t,x) -
  v_{\varepsilon\lambda}^y(t,z)$,
  \[
  \frac12 \frac{d}{dt} \big| v_{\varepsilon\lambda}^y(t,x) -
  v_{\varepsilon\lambda}^y(t,z) \big|^2 + I = 0,
  \]
  where $I \equiv I(\varepsilon,\lambda,t,x,z,y)$ satisfies
  \begin{align*}
    I &=
    \big\langle(A^{\varepsilon\lambda})'(u_{\varepsilon\lambda}(t,x))
    \big( v_{\varepsilon\lambda}^y(t,x) -
    v_{\varepsilon\lambda}^y(t,z)\big),
    v_{\varepsilon\lambda}^y(t,x) - v_{\varepsilon\lambda}^y(t,z) \big\rangle\\
    &\quad + \big\langle
    (A^{\varepsilon\lambda})'(u_{\varepsilon\lambda}(t,x))
    v_{\varepsilon\lambda}^y(t,z) -
    (A^{\varepsilon\lambda})'(u_{\varepsilon\lambda}(t,z))
    v_{\varepsilon\lambda}^y(t,z),
    v_{\varepsilon\lambda}^y(t,x) - v_{\varepsilon\lambda}^y(t,z) \big\rangle\\
    &\geq \big\langle
    (A^{\varepsilon\lambda})'(u_{\varepsilon\lambda}(t,x))
    v_{\varepsilon\lambda}^y(t,z)
    -
    (A^{\varepsilon\lambda})'(u_{\varepsilon\lambda}(t,z))
    v_{\varepsilon\lambda}^y(t,z),
    v_{\varepsilon\lambda}^y(t,x) - v_{\varepsilon\lambda}^y(t,z)
    \big\rangle,
  \end{align*}
  once one takes into account that
  $(A^{\varepsilon\lambda})'(u_{\varepsilon\lambda}(t,x))$ is a positive linear
  operator, because $A^{\varepsilon\lambda}:H \to H$ is monotone and
  differentiable. Then we also get, recalling that
  $|v_{\varepsilon\lambda}^y(t,z)| \leq |y|$,
  \begin{align*}
    -I &\leq \frac12 \big| \big(
    (A^{\varepsilon\lambda})'(u_{\varepsilon\lambda}(t,x)) -
    (A^{\varepsilon\lambda})'(u_{\varepsilon\lambda}(t,z)) \big)
    v_{\varepsilon\lambda}^y(t,z) \big|^2\\
    &\quad + \frac12 \big| v_{\varepsilon\lambda}^y(t,x) 
    - v_{\varepsilon\lambda}^y(t,z) \big|^2\\
    &\leq \frac12 |y|^2 \, [(A^{\varepsilon\lambda})']^2_1 \, \big|
    u_{\varepsilon\lambda}(t,x) - u_{\varepsilon\lambda}(t,z) \big|^2
    + \frac12 \big| v_{\varepsilon\lambda}^y(t,x)
    - v_{\varepsilon\lambda}^y(t,z) \big|^2\\
    &\lesssim |y|^2 \, |x-z|^2 + \big| v_{\varepsilon\lambda}^y(t,x) -
    v_{\varepsilon\lambda}^y(t,z) \big|^2.
  \end{align*}
  In the last step we have used that $(A^{\varepsilon\lambda})' \in
  C_b^{\infty}(H \to \mathcal{L}(H \to H)$ and that $x \mapsto
  u_{\varepsilon\lambda}(t,x)$ is Lipschitz. Gronwall's inequality
  then yields \[ \big| v_{\varepsilon\lambda}(t,x) -
  v_{\varepsilon\lambda}(t,z) \big| \lesssim |x-z|, \] thus concluding
  the proof that $f_{\varepsilon\lambda} \in C^{1,1}_b(H)$.

Let us now prove that $f_{\varepsilon\lambda} \in C^1_b(V')$: in view of
(\ref{eq:diffres}), it is enough to prove that
$|v_{\varepsilon\lambda}^y(x)|_{V'} \leq |y|_{V'}$. Here we regard $\varphi$ as
a function from $V'$ to $\erre$ and $x \mapsto u_{\varepsilon\lambda}(t,x)$ as
a map from $V'$ to itself, so that $v_{\varepsilon\lambda}(t,x) \in
\mathcal{L}(V' \to V')$ and $v_{\varepsilon\lambda}^y(t,x) \in V'$. Let $J:V' \to
V'' \simeq V$ denote the duality map between $V'$ and $V$ (or
equivalently, let $J=F^{-1}$, with $F$ the duality map between $V$ and
$V'$). Multiplying both sides of (\ref{eq:deri}) by
$J(v_{\varepsilon\lambda}^y(t,x))$, in the sense of the duality pairing between
$V'$ and $V$, we obtain, taking into account that $(A^{\varepsilon\lambda})'$
is positive, $|v_{\varepsilon\lambda}^y(x)|_{V'} \leq |y|_{V'}$. We have thus
proved that $f_{\varepsilon\lambda} \in \mathcal{K}$.  This in turn implies
that $f_{\varepsilon\lambda}$ satisfies
  \begin{multline*}
  \alpha f_{\varepsilon\lambda}(x) + {}_{V'}\big\langle A^{\varepsilon\lambda}x,
  Df_{\varepsilon\lambda}(x)\big\rangle_V\\
  - \int_Z \big[f_{\varepsilon\lambda}(x+G(z)) - f_{\varepsilon\lambda}(x) 
                - \ip{Df_{\varepsilon\lambda}(x)}{G(z)}\big]\,m(dz)
  = \varphi(x), \qquad x \in H,
  \end{multline*}
  hence also
  \[
  \alpha f_{\varepsilon\lambda}(x) + \ip{Ax}{Df_{\varepsilon\lambda}(x)}
  - \mathcal{I}f_{\varepsilon\lambda}(x) = \varphi(x)
  + \ip{Ax-A^{\varepsilon\lambda}x}{Df_{\varepsilon\lambda}(x)},
  \]
  and
  \[
  \big| \alpha f_{\varepsilon\lambda} + \ip{Ax}{Df_{\varepsilon\lambda}}
  - \mathcal{I}f_{\varepsilon\lambda} \big|_{L_1(H,\nu)} \leq 
  |\varphi|_{L_1(H,\nu)} 
  + \big| \ip{Ax-A^{\varepsilon\lambda}x}{Df_{\varepsilon\lambda}}
    \big|_{L_1(H,\nu)}.
  \]
  Note that $|Df_{\varepsilon\lambda}(x)|_V \lesssim 1$ thanks to the above
  bound on $|v_{\varepsilon\lambda}(x)|_{V'}$, so that
  \begin{align*}
  &\int_H \big|\ip{Ax-A^{\varepsilon\lambda}x}{Df_{\varepsilon\lambda}(x)}
         \big|\,\nu(dx)\\
  &\qquad\qquad \lesssim
  \int_H |Ax-A^{\varepsilon}x|_{V'}\,\nu(dx)
  + \int_H |A^{\varepsilon}x-A^{\varepsilon\lambda}x|_{V'}\,\nu(dx)
  \end{align*}
  which converges to $0$ as $\lambda \to 0$ and $\varepsilon \to 0$ by
  the dominated convergence theorem. In fact, thanks to the hypotheses
  on $A$ and $A^\varepsilon$, we have $|Ax-A^{\varepsilon}x|_{V'}
  \lesssim |x|_V^p+1$ for all $x \in V$, and $\nu$ is concentrated on
  $V$ by (\ref{eq:ippo}). Moreover, since $H \hookrightarrow V'$ is
  continuous and $|A^{\varepsilon\lambda}x| \leq |A^\varepsilon x|$
  for all $x \in H$, we have
  $|A^{\varepsilon}x-A^{\varepsilon\lambda}x|_{V'}| \lesssim |x|_H + 1
  \in L_1(H,\nu)$, because of (\ref{eq:ippo}).  We have thus shown
  that
  \[
  \lim_{\varepsilon\to 0}\,\lim_{\lambda\to 0}
  \big( \alpha f_{\varepsilon\lambda} + \ip{Ax}{Df_{\varepsilon\lambda}}
    - \mathcal{I}f_{\varepsilon\lambda} \big) = \varphi
  \]
  in $L_1(H,\nu)$, i.e. that $R(\alpha I - L)$ is dense in
  $L_1(H,\nu)$, because $\mathcal{K}$ is dense in $L_1(H,\nu)$.  Since
  $L$ is also dissipative, we immediately infer that $\bar{L}$ is
  $m$-dissipative in $L_1(H,\nu)$.

  Let us denote the strongly continuous semigroup of contractions on
  $L_1(H,\nu)$ with generator $\bar{L}$ by $T_t$.  Let us now prove
  that $T_t$ is Markovian: for this it is enough to show that
  \[
  \int_H \bar{L}f \, 1_{\{f>1\}}\,d\nu \leq 0
  \qquad \forall f \in D(\bar{L})
  \]
  (see e.g. \cite[p.~109]{Stannat-L1}). Let $\gamma_\varepsilon \in
  C^2(\erre)$ be a convex function such that $\gamma'_\varepsilon$ is
  a smooth approximation of $x \mapsto 1_{]1,+\infty[}(x)$. Then,
  proceeding as in the proof of the previous lemma, we obtain the
  claim for all $f \in \mathcal{K}$ first, and for all $f \in
  D(\bar{L})$ by density.

  In order to prove that $\nu$ is an invariant measure for $T_t$, let
  us observe that one has, by definition of infinitesimal invariance
  and by a density argument,
  \[
  \int_H \bar{L}f\,d\nu = 0 \qquad \forall f \in D(\bar{L}).
  \]
  Since $T_tf \in D(\bar{L})$ for all $t \geq 0$ if $f \in
  D(\bar{L})$, we have, by the infinitesimal invariance of $\nu$,
  \[
  \int_H T_tf\,d\nu = \int_H f\,d\nu + \int_0^t\int_H \bar{L}T_sf\,d\nu\,ds
  = \int_H f\,d\nu
  \]
  for all $f \in D(\bar{L})$, thus also for all $f \in L_1(H,\nu)$ by
  density.
\end{proof}
\begin{rmk}
  The theorem implies that if $\nu$ is an invariant measure to the
  stochastic equation (\ref{eq:caro}) satisfying the integrability
  condition, then for all $f \in B_b(H)$, one has that $T_tf$ is a
  $\nu$-version of $P_tf$ for all $t \geq 0$.
\end{rmk}
\begin{rmk}
  The dissipativity of $L$ in $L_2(H,\nu)$ is easier to prove: in
  fact, for $f \in \mathcal{K}$, we have
  \[
  L(f^2) = 2fLf + \Gamma(f,f),
  \]
  where
  \[
  \Gamma(f,f) = \int_Z |f(x+G(z))-f(x)|^2\,m(dz) \geq 0
  \]
  is the so-called carr\'e du champ operator associated to
  $\mathcal{I}$, which is defined as
  \[
  \Gamma(f,g) := \mathcal{I}(fg) - f\mathcal{I}g - g\mathcal{I}f
  \]
  and takes the form
  \[
  \Gamma(f,g) = \int_Z \big(f(x+G(z))-f(x)\big) \big(g(x+G(z))-g(x)\big)
  \,m(dz).
  \]
  In particular one has the integration by parts formula
  \[
  \int f\,Lf\,d\nu = -\int \Gamma(f,f)\,d\nu.
  \]
  However, as one might expect, one needs stronger integrability
  assumptions on $\nu$ to prove the essential $m$-dissipativity of
  $L$, e.g. (roughly) of the type $x \mapsto |Ax|^2 \in
  L_1(H,\nu)$. Such an assumption would in turn require the data of
  the problem to be much more regular.
\end{rmk}

%---------------------------------------------------------------------

\section{Applications}     \label{sec:ex}

\subsection{SDEs with monotone drift}
If $V=H=\erre^d$, so that (\ref{eq:caro}) reduces to an ordinary
stochastic differential equation with monotone drift, our results on
ergodicity can be recovered applying \cite[Thm.~2]{KryGyo1}, which
provides existence and uniqueness of strong solutions (even in a more
general situation than that treated here), and
\cite[Thm.~I.25]{Sko-asympt}, which establishes boundedness in
probability for the solution by a Lyapunov-type criterion. In our case
one can choose as Lyapunov function simply $V(x)=|x|^2$.

\subsection{Stochastic equations with drift in divergence form}
Let $D \subset \erre^d$ be a bounded domain with smooth boundary, and
set $H:=L_2(D)$, $V=\mathring{W}_p^1(D)$, $V'=W_q^{-1}(D)$, with
$p>2$, $p^{-1}+q^{-1}=1$. Note that $V \hookrightarrow H$ is compact
by a Sobolev embedding theorem (see e.g. \cite[Thm.~0.4]{AmbPro}).
Consider the operator $A: V \to V'$ defined by
\[
Au := -\div\big(a(\nabla u)\big),
\]
which must be interpreted, as usual, as
\[
\ip{Au}{v} = \int_D \ip{a(\nabla u)}{\nabla v}_{\erre^d}\,dx \qquad
\forall v \in V.
\]
Here $a \in C^0(\erre^d \to \erre^d)$ is a monotone function satisfying
the polynomial growth condition $|a(x)| \lesssim |x|^{p-1}+1$ and the
coercivity condition $xa(x) \gtrsim |x|^p-1$.

Let $a_\varepsilon \in \dot{C}^{0,1}(\erre^d \to \erre^d)$,
$\tilde{a}_\varepsilon(x)=\varepsilon^{-1}(x-(I+\varepsilon a)^{-1}x)$
be the Yosida approximation of $a$, set $a_\varepsilon =
\tilde{a}_\varepsilon \ast \zeta_\varepsilon$, where
$\{\zeta_\varepsilon\}$ is a standard sequence of mollifier (in
particular $a_\varepsilon \in C^\infty$, $a'_\varepsilon \in
C_b^\infty$), and define the operator $A^\varepsilon$ on smooth
functions as
\[
A^\varepsilon u = -(I-\varepsilon \Delta)^{-1}
\div\big(a_\varepsilon(\nabla (I-\varepsilon \Delta)^{-1}u)\big),
\]
where $\Delta$ stands for the Dirichlet Laplacian on $D$.
We are going to show that $A^\varepsilon$ satisfies the assumptions of
the previous section. For this we shall need some elliptic regularity
results, which we recall here (see e.g. \cite[{\S}8.5]{krylov-LectSob}
for details).
\begin{lemma}     \label{lm:kry}
  Let $f \in L_p(D)$, $p \geq 2$. Then there exists $\varepsilon_1$
  such that, for all $\varepsilon < \varepsilon_1$, there exists a
  unique solution $u \in \mathring{W}_p^1$ to the equation
  \[
  u - \varepsilon \Delta u = f
  \]
  on $D$ with Dirichlet boundary conditions. Moreover $u$ satisfies
  the estimate
  \[
  | u |_{L_p(D)} + \varepsilon^{1/2} | u |_{W_p^1(D)} \leq N
  | f |_{L_p(D)},
  \]
where $N$ does not depend on $\varepsilon$.
\end{lemma}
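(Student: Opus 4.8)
The plan is to prove Lemma \ref{lm:kry} by invoking the standard $L_p$-theory for second-order elliptic equations, suitably rescaled to keep track of the dependence on $\varepsilon$. The equation $u - \varepsilon\Delta u = f$ on $D$ with Dirichlet boundary conditions is a uniformly elliptic second-order problem whose solvability in $\mathring{W}_p^1(D)$ (indeed in $W_p^2(D)\cap\mathring{W}_p^1(D)$) for $f\in L_p(D)$ is classical, provided $D$ has smooth boundary; existence and uniqueness for small $\varepsilon$ follow from the a priori estimate together with a continuity/method-of-continuity argument. The genuinely delicate point is that the constant $N$ in the estimate
\[
|u|_{L_p(D)} + \varepsilon^{1/2}|u|_{W_p^1(D)} \leq N|f|_{L_p(D)}
\]
must be \emph{independent of $\varepsilon$}, with the characteristic $\varepsilon^{1/2}$ weight on the gradient term. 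I expect this scaling to be the main obstacle, and it is where the bulk of the work lies.

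First I would establish the zeroth-order bound $|u|_{L_p(D)}\leq N|f|_{L_p(D)}$. This is the easiest piece: testing the equation against $|u|^{p-2}u$ (a legitimate test function after a routine approximation since $u\in\mathring{W}_p^1$) gives
\[
\int_D |u|^p\,dx + \varepsilon(p-1)\int_D |u|^{p-2}|\nabla u|^2\,dx
= \int_D f\,|u|^{p-2}u\,dx,
\]
where the gradient integral arises from integrating $-\varepsilon\Delta u$ by parts and is nonnegative. Dropping it and applying Hölder's inequality to the right-hand side yields $|u|_{L_p}^p \leq |f|_{L_p}|u|_{L_p}^{p-1}$, hence $|u|_{L_p}\leq |f|_{L_p}$ with a constant independent of $\varepsilon$ (in fact equal to $1$). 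This confirms the first term in the estimate with the sharp constant.

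Next I would handle the gradient term, which is where the $\varepsilon^{1/2}$ factor and the $\varepsilon$-uniformity become essential. The natural route is the change of variables $y = x/\sqrt{\varepsilon}$, which transforms the operator $I-\varepsilon\Delta$ on $D$ into $I-\Delta$ on the dilated domain $D_\varepsilon := \varepsilon^{-1/2}D$; the rescaled function $\tilde u(y)=u(\sqrt{\varepsilon}y)$ then solves $\tilde u - \Delta\tilde u = \tilde f$ on $D_\varepsilon$, and the scale-invariant $L_p$-estimate $|\tilde u|_{L_p(D_\varepsilon)} + |\nabla\tilde u|_{L_p(D_\varepsilon)} \lesssim |\tilde f|_{L_p(D_\varepsilon)}$ for the fixed operator $I-\Delta$ translates back, upon undoing the substitution and tracking the Jacobian factors, precisely into the weighted estimate claimed. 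The subtlety here is that the domain $D_\varepsilon$ expands as $\varepsilon\to0$, so one must ensure the implied constant for the fixed operator $I-\Delta$ is uniform over this family of (smooth, growing) domains; this is where one appeals to the global $L_p$ elliptic theory on the half-space or $\erre^d$ combined with a localization argument near the boundary, exploiting that the boundary curvature of $D_\varepsilon$ flattens as $\varepsilon\to0$. I would cite \cite[{\S}8.5]{krylov-LectSob} for the relevant uniform interior and boundary $L_p$-estimates and simply indicate that this rescaling makes the $\varepsilon^{1/2}$ weight transparent, rather than reproducing the full boundary-regularity machinery.
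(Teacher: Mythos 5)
Your proposal is correct in outline, but it takes a genuinely different route from the paper --- which in fact gives no proof at all: Lemma \ref{lm:kry} is explicitly \emph{recalled} from \cite[{\S}8.5]{krylov-LectSob}, where, after substituting $\lambda=1/\varepsilon$, it is the standard parameter-elliptic estimate $\lambda|u|_{L_p}+\lambda^{1/2}|Du|_{L_p}+|D^2u|_{L_p}\leq N|g|_{L_p}$ for the Dirichlet problem $\lambda u-\Delta u=g$, $\lambda\geq\lambda_0$. Your rederivation is sound: the testing step with $|u|^{p-2}u$ is correct (the integration by parts is legitimate because $\Delta u=\varepsilon^{-1}(u-f)\in L_p$) and gives the zeroth-order bound with constant $1$; the dilation $y=x/\sqrt{\varepsilon}$ is computed correctly, the Jacobian factors $\varepsilon^{-d/(2p)}$ cancelling on both sides, so the weighted estimate is exactly equivalent to an unweighted bound $|\tilde u|_{W_p^1(D_\varepsilon)}\leq N|\tilde f|_{L_p(D_\varepsilon)}$ for $I-\Delta$ on $D_\varepsilon=\varepsilon^{-1/2}D$ with $N$ uniform over the expanding family. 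The one caveat you should register is that this uniformity is not a technical footnote but the entire content of the cited theorem: the localization you sketch (whole-space and half-space Calder\'on--Zygmund estimates, a cover by balls of fixed radius with bounded overlap, absorption of the commutator terms $\nabla\zeta_i\cdot\nabla\tilde u$ and $(\Delta\zeta_i)\tilde u$ via interpolation together with the $L_p$ bound you already proved) is essentially how the parameter-dependent estimate is established in the first place, so your scaling argument relocates the difficulty rather than removing it; in particular, citing \cite[{\S}8.5]{krylov-LectSob} for those uniform model estimates would be circular, since that section is where the lemma itself lives --- the whole-space and half-space ingredients come from the earlier chapters. In exchange, your route makes transparent where the $\varepsilon^{1/2}$ weight comes from (parabolic scaling) and that the $L_p$ part costs nothing, whereas the paper's citation buys rigor and economy in one stroke. (A minor point: on a bounded smooth domain, existence and uniqueness hold for every $\varepsilon>0$; the restriction $\varepsilon<\varepsilon_1$ is an artifact of the general theory for operators with lower-order terms, so your method-of-continuity step does not actually need the smallness of $\varepsilon$.)
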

Let us first show that $A^\varepsilon$ is well-defined both as an
operator from $H$ to itself, as well as from $V$ to $V'$.
Using the notation
\[
v^{(\varepsilon)} = (I-\varepsilon \Delta)^{-1}v,
\]
we may write
\begin{equation}     \label{eq:ape}
\ip{A^\varepsilon u}{v} = \int_D
\ip{a_\varepsilon(\nabla \ep{u})}{\nabla \ep{v}}_{\erre^d}\,dx.
\end{equation}
Note that if $v \in H$, then $\ep{v} \in \mathring{W}_2^1$ and
\[
|\nabla\ep{v}|_H \leq |\ep{v}|_{W_2^1(D)} \lesssim_\varepsilon |v|_H.
\]
Moreover, since $a_\varepsilon$ is Lipschitz continuous, we have
\[
|a_\varepsilon(\nabla \ep{u})| \leq |a_\varepsilon(\nabla \ep{u})
- a_\varepsilon(0)| + |a_\varepsilon(0)|
\lesssim |\nabla \ep{u}| + |a_\varepsilon(0)|,
\]
thus also
\[
\big| \ip{A^\varepsilon u}{v} \big| \leq
|a_\varepsilon(\nabla \ep{u})|_H \, |\nabla \ep{v}|_H
\lesssim (|u|_H+a_\varepsilon(0))|v|_H,
\]
which shows that $A^\varepsilon$ is well-defined from $H$ to
itself. Similarly, if $u$, $v \in V=\mathring{W}_p^1(D)$, we have, by
H\"older's inequality,
\[
\big| \ip{A^\varepsilon u}{v} \big| \leq
|a_\varepsilon(\nabla \ep{u})|_{L_q(D)} \, |\nabla \ep{v}|_{L_p(D)}
\lesssim (|u|_V+1)|v|_V,
\]
where we have used again Lemma \ref{lm:kry} and $\|\cdot\|_{L_q(D)}
\lesssim \|\cdot\|_{L_p(D)}$ for $p>q$ and $D$ bounded. We have thus
shown that $A^\varepsilon$ is well-defined from $V$ to $V'$.

The monotonicity of $A^\varepsilon$, both as an operator from $H$ to
itself and from $V$ to $V'$ is immediate by (\ref{eq:ape}) and the
monotonicity of $a_\varepsilon$.

Let us now show that $A^\varepsilon$ is Lipschitz continuous on
$H$. In fact, taking into account Lemma \ref{lm:kry}, we have
\begin{align*}
\big| \ip{A^\varepsilon u - A^\varepsilon v}{w} \big|
&= \big| \big\langle a_\varepsilon (\nabla\ep{u}) - 
a_\varepsilon (\nabla\ep{v}), \nabla\ep{w} \big\rangle \big|\\
&\lesssim_\varepsilon | \nabla(\ep{v} - \ep{w})| \, |\nabla\ep{w}|
\lesssim_\varepsilon |u - v|_H \, |w|_H.
\end{align*}
Since $a_\varepsilon \in C^1$, a direct computation yields that
$A^\varepsilon$ is G\^ateaux differentiable from $V$ to $V'$ with
G\^ateaux differential
\begin{equation}     \label{eq:aped}
\big\langle (A^\varepsilon)'(u)[v],w \big\rangle =
\int_D \big\langle a'_\varepsilon(\nabla\ep{u}) \nabla\ep{v},\nabla\ep{w}
\big\rangle_{\erre^d}\,dx
\end{equation}
for all $u$, $v$, $w \in V$. Note that the integral is well defined
because $|a'_\varepsilon(x)| \lesssim 1$ for all $x \in \erre^d$,
since $a_\varepsilon$ is Lipschitz continuous. By a well-known
criterion, we can conclude that $A^\varepsilon \in C^1(V \to V')$ if
we show that $(A^\varepsilon)'$ in (\ref{eq:aped}) is continuous as a
map $V \to \mathcal{L}(V \to V')$. Let $u_n \to u$ in
$\mathring{W}_p^1(D)$ as $n \to \infty$: applying H\"older's
inequality and Lemma \ref{lm:kry} repeatedly, we obtain
\begin{align*}
  &\sup_{|v|_V \leq 1} \, \sup_{|w|_{V} \leq 1} 
  \big\langle (A^\varepsilon)'(u_n)[v]-(A^\varepsilon)'(u)[v],w
  \big\rangle\\
  &\qquad\qquad\leq |\nabla\ep{w}|_{L_p(D)} \,
  \big| \big(a'_\varepsilon(\nabla\ep{u}_n)
        - a'_\varepsilon(\nabla\ep{u})\big)\nabla\ep{v}
  \big|_{L_{p/(p-1)}(D)}\\
  &\qquad\qquad\lesssim |\nabla\ep{v}|_{L_p(D)}
  \big| a'_\varepsilon(\nabla\ep{u}_n)
        - a'_\varepsilon(\nabla\ep{u}) \big|_{L_{p/(p-2)}(D)}\\
  &\qquad\qquad\lesssim   \big| a'_\varepsilon(\nabla\ep{u}_n)
        - a'_\varepsilon(\nabla\ep{u}) \big|_{L_{p/(p-2)}(D)}
  \xrightarrow{n \to \infty} 0.
\end{align*}
In fact, since $a'_\varepsilon$ is Lipschitz, it follows that
$|a'_\varepsilon(x)| \lesssim |x|^{p-2}+1$, and $\nabla\ep{u}_n \to
\nabla\ep{u}$ in $L_p$ implies convergence a.e. on a subsequence, from
which we can conclude by the dominated convergence theorem (see
e.g. \cite[Thm.~1.2.6]{AmbPro} for complete details in a similar
situation). We have thus proved that $A^\varepsilon \in C^1(V
\to V')$.

We conclude proving that
\[
\lim_{\varepsilon \to 0} |A^\varepsilon u - Au|_{V'} = 0
\qquad \forall u \in V.
\]
We have
\begin{multline*}
\int_D \big| \big\langle a_\varepsilon(\nabla\ep{u}),\nabla\ep{w} \big\rangle
- \big\langle a(\nabla u),\nabla w \big\rangle \big|\,dx\\
\leq 
\int_D \big| \big\langle a_\varepsilon(\nabla\ep{u}) - a(\nabla u),
\nabla w \big\rangle \big|\,dx
+ \int_D \big| \big\langle a_\varepsilon(\nabla\ep{u}),
\nabla\ep{w} - \nabla w \big\rangle \big|\,dx.
\end{multline*}
Since $|a_\varepsilon(x)| \leq N(|x|^{p-1}+1)$ with $N$ independent of
$\varepsilon$, the second term on the right-hand side can be
majorized by
\[
\big| a_\varepsilon(\nabla\ep{u}) \big|_{L_q} \,
\big| \nabla\ep{w} - \nabla w \big|_{L_p}
\lesssim \big(\big| \nabla u \big|^{p-1}_{L_p} +1 \big)\,
\big| \nabla\ep{w} - \nabla w \big|_{L_p}
\xrightarrow{\varepsilon\to 0} 0,
\]
where we have used once again Lemma \ref{lm:kry}.
Since $\nabla\ep{u} \to \nabla u$ in $L_p$ as $\varepsilon \to 0$, we
can upgrade the convergence to a.e. convergence, passing to a
subsequence, still denoted by $\varepsilon$. By Egorov's theorem,
there exists $D_\delta \subset D$, $|D \setminus D_\delta| \leq
\delta$, such that $\nabla\ep{u} \to \nabla u$ uniformly on $D_\delta$
as $\varepsilon \to 0$.  Since $a_\varepsilon$ and its limit function
$a$ are continuous on $\erre^d$, we have
\[
\lim_{\varepsilon \to 0} \lim_{\eta \to 0}
a_\varepsilon(\nabla u^{(\eta)}) =
\lim_{\eta \to 0} \lim_{\varepsilon \to 0} a_\varepsilon(\nabla u^{(\eta)}) =
a(\nabla u)
\]
pointwise on $D_\delta$, hence by a diagonal extraction argument,
there exists a further subsequence of $\varepsilon$, still denoted by
$\varepsilon$, such that, by the dominated convergence theorem,
\[
\big| a_\varepsilon(\nabla\ep{u}) - a(\nabla u) \big|_{L_q(D_\delta)}
\xrightarrow{\varepsilon\to 0} 0.
\]
On the other hand, we have
\begin{align*}
\big| a_\varepsilon(\nabla\ep{u}) - a(\nabla u) \big|_{L_q(D \setminus D_\delta)}
&\lesssim \int_{D \setminus D_\delta} \big(|\nabla u|^p +1\big)\,dx\\
&\lesssim |D \setminus D_\delta| \, \big(|\nabla u|_{L_p}+1\big)
\leq \delta \big(|\nabla u|_{L_p}+1\big).
\end{align*}
Since $\delta$ is arbitrary, we conclude that the integral above
converges to zero as $\varepsilon \to 0$, thus finishing the proof.

\subsection{Stochastic porous media equations}
Let $D$, $\Delta$, $p$, $q$, and $\{\zeta_\varepsilon\}$ be defined as
in the previous subsection.  Set $V=L_p(D)$, $H=W_2^{-1}(D)$,
$V'=\Delta(L_q(D))$, so that $V \hookrightarrow H$ compactly by a
Sobolev embedding theorem (see e.g. \cite[Prop.~4.6]{Tri3}). The norm
in $\mathring{W}_2^{-1}(D)$ will be denoted by
$|\cdot|_{-1}$. Consider the operator
\begin{align*}
  A: V &\to V'\\
     u &\mapsto -\Delta\beta(u),
\end{align*}
where $\beta \in C^0(\erre)$ is increasing and satisfies
\[
x\beta(x) \gtrsim |x|^{p}-1, \qquad
|\beta(x)| \lesssim |x|^{p-1} + 1
\]
for all $x \in \erre$. Note that these conditions on $\beta$ imply
that $A$ is well-defined (see e.g. \cite[{\S}4.1]{PreRoeck} for
details). Set
\[
\beta_\varepsilon(x) = \tilde{\beta}_\varepsilon \ast \zeta_\varepsilon,
\qquad 
\tilde{\beta}_\varepsilon = 
-\varepsilon^{-1} \vee \beta(x) \wedge \varepsilon^{-1},
\]
so that $\beta_\varepsilon \in C^\infty_b$, and define the operator
\[
A^\varepsilon u := -\Delta(I-\varepsilon\Delta)^{-1}
\beta_\varepsilon\big((I-\varepsilon\Delta)^{-1}u\big)
\]
on smooth functions. Then $A^\varepsilon$ is well-defined as an operator
from $H$ to itself, since
\[
\ip{A^\varepsilon u}{w}_{-1} = \int_D \beta_\varepsilon(\ep{u}) \ep{w}\,dx
\leq \big| \ep{w} \big|_{L_2(D)} \,
\big| \beta_\varepsilon(\ep{u}) \big|_{L_2(D)}
\lesssim |w|_{-1} \big( |u|_{-1}+1 \big)
\]
for all $u$, $w \in \mathring{W}_2^{-1}(D)$, because
$\beta_\varepsilon$ is Lipschitz and $|\ep{u}|_{L_2(D)} \lesssim
|u|_{-1}$ (see e.g. \cite[Thm.~3.3.1]{barbu-pde}). A completely
analogous computation also shows that $A^\varepsilon \in
\dot{C}^{0,1}(H \to H)$.  Let us also show that $A^\varepsilon$ is
well-defined as an operator from $V$ to $V'$: for $u$, $w \in L_p(D)$,
H\"older's inequality yields
\begin{align*}
\ip{A^\varepsilon u}{w} = \int_D \beta_\varepsilon(\ep{u}) \ep{w}\,dx
\leq \big| \ep{w} \big|_{L_p(D)}
     \big| \beta_\varepsilon(\ep{u}) \big|_{L_q(D)}
\lesssim |w|_{L_p(D)} \big( |u|_{L_p(D)} + 1 \big),
\end{align*}
where we have used Lemma \ref{lm:kry} and the estimate
$|\beta_\varepsilon(x)| \leq |\beta(x)| \lesssim |x|^{p-1} + 1$. The
latter also immediately implies that $|A^\varepsilon x|_{V'} \leq
N(|x|_V^{p-1}+1)$, with $N$ independent of $\varepsilon$.

As in the previous subsection, it is not difficult to see that
$A^\varepsilon$ is G\^ateaux differentiable from $V$ to $V'$, with
G\^ateaux differential
\[
\big\langle (A^\varepsilon)'u[v],w \big\rangle =
\int_D \beta'_\varepsilon(\ep{u})\ep{v}\ep{w},
\qquad u,\,v,\,w \in L_p(D).
\]
The continuity of the G\^ateaux differential (hence the Fr\'echet
differentiability of $A^\varepsilon:V \to V'$) follows by an argument
similar to the one used in the previous subsection, and we shall be
more concise here: for $u_n \to u$ in $L_p(D)$, we have
\begin{align*}
\big\langle (A^\varepsilon)'(u_n)[v] - (A^\varepsilon)'(u)[v],w \big\rangle
&\leq \big| \ep{w} \big|_{L_p(D)} \big| [\beta'_\varepsilon(\ep{u}_n)
- \beta'_\varepsilon(\ep{u})] \ep{v} \big|_{L_{p/(p-1)}(D)}\\
&\lesssim |v|_{L_p(D)} \, |w|_{L_p(D)} \,
\big| \beta'_\varepsilon(\ep{u}_n)-\beta'_\varepsilon(\ep{u})
\big|_{L_{p/(p-2)}(D)}.
\end{align*}
We proceed now as above: since $\ep{u}_n \to \ep{u}$ a.e. along a
subsequence, we can appeal to the dominated convergence theorem, in
view of the obvious bound $|\beta'_\varepsilon(x)| \lesssim |x|^{p-2}+1$.

The proof that $A^\varepsilon u \to Au$ in $V'$ for all $u \in V$ as
$\varepsilon \to 0$ is completely similar to the corresponding proof
in the previous subsection, hence omitted.

\bibliographystyle{amsplain}
\bibliography{ref}

\end{document}